\newtheorem{thm}{Theorem}[section]
\newtheorem{cor}[thm]{Corollary}
\newtheorem{lem}[thm]{Lemma}
\newtheorem{prop}[thm]{Proposition}
\newtheorem{clm}[thm]{Claim}
\newtheorem*{thm1}{Theorem \ref{7}}
\newtheorem*{thm2}{Theorem \ref{8}}
\newtheorem*{dfn5}{Definition \ref{404}}
\theoremstyle{remark}
\newtheorem{rmk}[thm]{Remark}
\newtheorem{ex}[thm]{Example}
\newtheorem{dis}[thm]{Discussion}
\newtheorem{ntn}[thm]{Notation}
\newtheorem{obs}[thm]{Observation}
\theoremstyle{definition}
\newtheorem{dfn}[thm]{Definition}
\newcommand{\La}{\mathcal{L}}
\newcommand{\M}{\mathcal{M}}
\newcommand{\R}{\mathcal{R}}
\newcommand{\N}{\mathcal{N}}
\newcommand{\Z}{\mathbb{Z}}
\newcommand{\A}{\mathfrak{A}}
\newcommand{\C}{\mathfrak{C}}
\newcommand{\K}{\mathcal{K}}
\newcommand{\Kk}{\mathbf{K}}
\newcommand{\Vv}{\mathcal{V}}
\newcommand{\Uu}{\mathcal{U}}
\newcommand{\la}{\left <}
\newcommand{\ra}{\right >}
\newcommand{\uphp}{\upharpoonright}
\newcommand{\ov}{\overline}
\newcommand{\ovr}{\overrightarrow}
\newcommand{\leftexp}[2]{{\vphantom{#2}}^{#1}{#2}}
\newcommand{\nr}{\textrm{Ne\v{s}et\v{r}il-R\"{o}dl}}
\journal{Annals of Pure and Applied Logic}
\begin{document}

\begin{frontmatter}

\title{Characterization of NIP theories by ordered graph-indiscernibles}

\author{Lynn Scow}

\address{Department of Mathematics, Statistics, and Computer Science 

University of Illinois at Chicago 

Chicago, IL 60607, U.S.A.}

\begin{abstract}
We generalize the Unstable Formula Theorem characterization of stable theories from \citep{sh78}: that a theory $T$ is stable just in case any infinite indiscernible sequence in a model of $T$ is an indiscernible set.  We use a generalized form of indiscernibles from \citep{sh78}: in our notation, a sequence of parameters from an $L$-structure $M$, $(b_i : i \in I)$, indexed by an $L'$-structure $I$ is \emph{$L'$-generalized indiscernible in $M$} if qftp$^{L'}(\ov{i};I)$=qftp$^{L'}(\ov{j};I)$ implies tp$^L(\ov{b}_{\ov{i}}; M)$ = tp$^L(\ov{b}_{\ov{j}};M)$ for all same-length, finite $\ov{i}, \ov{j}$ from $I$.  Let $T_g$ be the theory of linearly ordered graphs (symmetric, with no loops) in the language with signature $L_g=\{<, R\}$.  Let $\K_g$ be the class of all finite models of $T_g$.  We show that a theory $T$ has NIP if and only if any $L_g$-generalized indiscernible in a model of $T$ indexed by an $L_g$-structure with age equal to $\K_g$ is an indiscernible sequence.  
\end{abstract}

\begin{keyword}
Classification theory \sep Ramsey classes \sep Generalized indiscernibles
\MSC[2010] 03C45 \sep 03C68 \sep 05C55
\end{keyword}
\end{frontmatter}

\section{Introduction}

We know from \cite{sh78} the result that a theory $T$ is stable if and only if for any model $M$ of $T$, every infinite indiscernible sequence in $M$ is an indiscernible set.  It is interesting to consider generalizations of the condition ``every infinite indiscernible sequence in a model of $T$ is an indiscernible set'' and determine whether any additional classification-theoretic classes of theories can be captured by such a condition.  The main result of this paper is that NIP theories can be characterized by one such generalization, using the generalization of ``indiscernible sequence'' first introduced in \citep[chap.~VII]{sh78} as \emph{indiscernible indexed set}.  The following is notation we will use for the case of indiscernible indexed sets when the indexing structure is linearly ordered.

Consider a sequence $\ov{a}_i = f(i)$ given by an injection, $f: I \rightarrow M^n$ for some fixed $n$, where the indexing model, $I$, is an $L'$-structure, linearly ordered by some relation in $L'$, and the target model, $M$, is an $L$-structure.   Fix a sublanguage $L^\ast \subseteq L'$ (possibly $L^\ast = L'$.)  We say that $(\ov{a}_i : i \in I)$ is \emph{$L^\ast$-generalized indiscernible in $M$} if any same-length tuples of indices from $I$ having the same quantifier-free $L^\ast$-type map to tuples of elements from $M$ having the same complete $L$-type.   Such generalized indiscernibles have been used in \cite{bash10,dzsh04,lassh03} to explore the structure theory of SOP$_n$ theories for $n=1, 2$, stable classes of atomic models of a first order theory, and pseudo-elementary classes without the independence property.  As they are presented in \cite{sh78}, $L'$-generalized indiscernibles $(\ov{a}_i : i \in I)$ are required to have Th($I$) $\aleph_0$-categorical in the quantifier-free language.  We will assume further that $L'$ is finite and relational in this paper.

A theory is stable just in case there is no formula that defines a linear order on some set of parameters in a model of that theory (the parameters are allowed to be themselves sequences of elements from the model.)  The property of being stable is a strong dividing line for theories.  Not having the independence property (equivalently, having NIP) is known to be a more general property than being stable, and provides a robust dividing line even among unstable theories.  A theory has NIP just in case there is no formula whose instances define every subset of an infinite set of parameters in some model of the theory.

In this paper we follow \cite{lassh03} by considering $L'$ to be the language with signature $\rho_g=\{<, R\}$ for ordered graphs.  Our inspiration for the main result is the observation that both being stable and being NIP are ``combinatorial weakness properties'' in the sense that they are both given by forbidding a certain collection of finite graphs.  Moreover, these are finite graphs whose edges are given by definable relations, but whose set of vertices is not necessarily definable.  The classical result characterizes stable $T$ by the inability of a subset $A \subset M \vDash T$ to inherit certain structure definable in a linearly ordered structure.  We characterize NIP theories $T$ by the inability of subsets of the target model to inherit the full graph structure definable in a sufficiently saturated ordered graph. 

Let $L_{g}$ be the first order language $\{<, R\}$, for an order and edge relation.  Let $T_g$ be the $L_{g}$-theory of linearly ordered, symmetric graphs with no loops.  Say $I$ is a \emph{quantifier-free weakly-saturated} model of a theory $T'$ if age($I$)=$\{ A \vDash T' : A \textrm{~finite~} \}$.  In the following, refer to a $L_g$-indiscernible as an \emph{ordered graph-indiscernible}.

The main result is proved in Section \ref{211}:

\begin{thm2} A theory $T$ has NIP if and only if any ordered graph-indiscernible in a model of $T$, indexed by a quantifier-free weakly-saturated ordered graph, is an indiscernible sequence. \end{thm2}

In Section \ref{204}, we introduce elements from Ne\v{s}et\v{r}il's theory of Ramsey Classes. \cite{ne05}  Consider a class $\Uu$ of finite structures in a finite relational language.  By an \emph{$A$-subobject of $B$} we will mean a substructure of $B$ isomorphic to $A$.  A class of relational structures $\Uu$ is a Ramsey class if for any finite integer $k$ and objects $A,B$ in $\Uu$, there is a $C$ in $\Uu$ satisfying the following: for any $k$-coloring of the $A$-subobjects of $C$, we can find $B' \subseteq C$, $B'$ isomorphic to $B$, so that all the $A$-subobjects of $B'$ are colored the same color.  This is a natural extension of the classical Ramsey theorem for finite sets, which states that for any integers $k,n,m$ there is a number $N$ so that for any $k$-coloring of the $n$-element subsets of $[N]$, we can find a subset of $[N]$ of size $m$ so that all the $n$-element sets from this subset are colored the same under this coloring.

In the course of proving Theorem \ref{8}, we will develop the notion of $I$-indexed indiscernibles having the ``modeling property.'' Say that for any $L'$-structure $I$, an $I$-indexed set is a set of parameters $(a_i : i \in I)$ in some target structure, $M$.  To find $I$-indexed indiscernibles based on the $(a_i : i \in I)$ is to find $L'$-generalized indiscernibles indexed by the structure $I$, such that for any formula $\psi(x_1, \ldots, x_n)$ from the language of $M$, finite tuples from the indiscernible indexed by a fragment of $I$ share the same $\psi$-type as some $(a_1, \ldots, a_n)$ of an isomorphic fragment of $I$.

We say that, $L'$-generalized indiscernibles have the \emph{modeling property for $\Uu$}, where $\Uu$ is the age of some $L'$-structure, if $I$-indexed indiscernibles where age$(I)$=$\Uu$ can be found based on any $I$-indexed set (see section 3 for precise definitions.)  We prove a  characterization of this property in terms of Ramsey classes:

\begin{thm1}  Let $L'$ be a finite relational language containing a binary relation symbol for order, $<$, and let $\Uu$ be the age of some linearly ordered $L'$-structure.  

$\Uu$ is a Ramsey Class if and only if $L'$-generalized indiscernibles have the modeling property for $\Uu$. \end{thm1}

		\subsection{conventions}  We mean $s, t, x, y, v$ to denote variables when they occur in the context of formulas, e.g. $\varphi(s_1, \ldots, s_m)$.   By $[n]$ we mean the set $\{1, \ldots, n\}$ of integers between 1 and $n$.  For $\ov{i} := \la i_1, \ldots, i_n \ra$ from $I$ and parameters $(a_i : i \in I)$, by $\ov{a}_{\ov{i}}$ we mean $\la a_{i_1}, \ldots, a_{i_n} \ra$. By $(\ov{a})_i$, we mean $a_i$.  Similarly, for a function $f$ with domain $I$ and a finite tuple of $\ov{i}$ from $I$, $f(\ov{i})$ denote $(f(i_1), \ldots, f(i_n))$  By Sym$(A)$, we mean the symmetric group on the set $A$.  

For standard model-theoretic terminology, see \cite{ck73}.  All languages are assumed to be first-order and to have a relation symbol for equality.  The notation $L$ is meant to denote a first-order signature (some list of relation, function and constant symbols.)  Given a first-order formula $\varphi$, we use the convention that $\varphi^0 := \varphi$ and $\varphi^1 := \neg (\varphi)$.  For some condition $P(i)$ such as ``$i \leq n$'' and formula $\varphi(\ov{x};\ov{a}_i)$ containing parameters indexed by $i$, by $\varphi^{P(i)}$ we will mean $\varphi^0$ if $P(i)$ holds, and $\varphi^1$ if $\neg P(i)$ holds.  For an $L$-structure $M$, by $|M|$ we mean the underlying set of this model, and by $||M||$ we mean the cardinality of this set.  Given a sublanguage $L' \subseteq L$ and an $L$-structure $M$, by $M \uphp L'$ we mean the reduct of $M$ to the sublanguage $L'$.  In the case of a relational language $L$ and $L$-structure $M$, a substructure $A \subseteq M$ is an $L$-structure on a subset of $M$ that interprets the relation symbols in $L$ to be the restrictions to $A$ of their interpretations in $M$.  For a finite type $p = \{\varphi_i(x_1, \ldots, x_k) : i \leq N\}$, by $(\bigwedge p)(x_1, \ldots, x_k)$ we mean $\bigwedge_{1 \leq i \leq k} \varphi_i(x_1, \ldots, x_k)$.

For a structure $I$, we may refer to its language as $L(I)$.  An $L$-formula $\varphi$ is \textit{$n$-ary} if its variables are among $x_1, \ldots, x_n$ (sometimes written as $\varphi = \varphi(x_1, \ldots, x_n)$.)  By $L_{\textrm{at}}(x_1, \ldots, x_n)$ we mean all atomic, $n$-ary $L$-formulas.  And we define $L_{\textrm{at}} := \bigcup_n L_{\textrm{at}}(x_1, \ldots, x_n)$.

Fix an $L$-structure $M$, an $L$-theory $T$, and a set of $L$-formulas, $\Delta$.  By a $(\Delta,m)$-type we mean a consistent set of $m$-ary formulas $\psi$, such that either $\psi$ or $\neg \psi$ is from $\Delta$.  Define $\displaystyle S_m^{\Delta}(\emptyset;M)$ to be the set of complete $(\Delta,m)$-types realized by tuples from $M$.  $p$ is a \textit{complete quantifier-free $(L,n)$-type in $T$} if it is a set of $n$-ary quantifier-free $L$-formulas, maximally consistent with $T$.  For a finite sequence $\ov{a}:=\la a_1, \ldots, a_n\ra$ from $M$, by tp$^{\Sigma}(\ov{a};M)$ we mean the set of all formulas $\varphi \in \Sigma$ satisfied by $\ov{a}$ in $M$; by qftp$^L(\ov{a}; M)$ we mean the set of all quantifier-free $L$-formulas satisfied by $\ov{a}$ in $M$. 

\

We give definitions and basic observations for generalized indiscernibles in Section 2; in Section 3, we give combinatorial definitions; in Section 4, we prove Theorem \ref{7}; and in Section 5, we prove the main result, Theorem \ref{8}.

		\section{Generalized indiscernibles}\label{201}

In our discussion of generalized indiscernibles, we mean a generalized indiscernible sequence to be some collection of parameters in an $L$-structure $M$, $(a_i : i \in I)$, indexed by some $L'$-structure $I$.  We refer to $M$ as the target-model, and $I$ as the index-model, as in \cite{sh78}.  Similarly, $L$ is the target-language and $L'$ is the index-language.  From now on in the paper, we make the convention that the element $a_i$ of the generalized indiscernible is possibly a tuple, but will be written as a singleton. 

\begin{dfn}[generalized indiscernibles] Fix an index language $L'$, a target language $L$ and a sublanguage $L'' \subseteq L'$.  Suppose we are given parameters $a_i:=f(i)$ where $f: I \rightarrow M^n$ for some fixed $n$.  

\begin{enumerate}
\item We say that $(a_i : i \in I)$ is $(L'', I)$-\emph{generalized-indiscernible in $M$} if for all finite $n$, 

\vspace{.1in}

\hspace{-.1in} $\textrm{qftp}^{L''}(i_1, \ldots, i_n; I) = \textrm{qftp}^{L''}(j_1, \ldots, j_n; I) \Rightarrow$

\vspace{.1in}

\hspace{1.5in} $\textrm{tp}^L(a_{i_1}, \ldots, a_{i_n}; M) = \textrm{tp}^L(a_{j_1}, \ldots, a_{j_n}; M)$

\vspace{.1in}

\item In the case that $L'$-structure $I$ and $L$-structure $M$ are clear from context, we may just say that $(a_i : i \in I)$ is \textit{$L''$-generalized indiscernible}.

\item By an \emph{$I$-indexed indiscernible} we mean an $(L(I), I)$-generalized indiscernible sequence $(a_i : i \in I)$ in some target structure $M$.

\item If we say that a sequence $(a_i : i \in I)$ is \emph{generalized indiscernible}, we mean that it is an $I$-indexed indisernible.
\end{enumerate}

We will always assume that generalized indiscernibles are \emph{nontrivial}, i.e. that whenever $i \neq j$, $a_i \neq a_j$.
\end{dfn}

We start with some easy observations.

\begin{obs} For arbitrary choice of $I$ and sufficiently-saturated $M$, $I$-indexed indiscernibles do not necessarily exist in $M$.
\end{obs}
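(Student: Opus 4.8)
The plan is to exhibit a single pair of structures for which existence visibly fails. Take the index model $I$ to be a countably infinite \emph{pure set}, i.e.\ an $L(I)$-structure with $L(I)=\{=\}$ and no other symbols, and let the target model $M$ be a sufficiently saturated model of the theory of dense linear orders, in the language $L=\{<\}$. (In fact any infinite $M$ in which $<$ is interpreted as a linear order will serve; saturation plays no real role, and we phrase it this way only to match the wording of the Observation.) The claim to prove is that there is no nontrivial $I$-indexed indiscernible $(a_i : i\in I)$ in $M$.

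Suppose for contradiction that $(a_i : i\in I)$ were such a sequence, given by a function $a\colon I\to M$ (take the arity $n=1$); by the standing nontriviality convention, $a_i\neq a_j$ whenever $i\neq j$. Fix distinct $i,j\in I$. Since the only nonlogical symbol of $L(I)$ is equality, the quantifier-free $L(I)$-type of a pair of distinct indices merely records that its two coordinates are unequal, so $\mathrm{qftp}^{L(I)}(i,j;I)=\mathrm{qftp}^{L(I)}(j,i;I)$. Applying $(L(I),I)$-generalized indiscernibility to the length-$2$ tuples $\la i,j\ra$ and $\la j,i\ra$ then forces $\mathrm{tp}^{L}(a_i,a_j;M)=\mathrm{tp}^{L}(a_j,a_i;M)$. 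But $a_i\neq a_j$ and $<^{M}$ is a linear order, so exactly one of $a_i<a_j$ and $a_j<a_i$ holds; say $a_i<a_j$. Then $x_1<x_2$ lies in $\mathrm{tp}^{L}(a_i,a_j;M)$, hence also in $\mathrm{tp}^{L}(a_j,a_i;M)$, i.e.\ $a_j<a_i$ --- a contradiction. The same argument applies verbatim with $I$ the Rado graph in the language $\{R\}$ (no order) and $M$ a sufficiently saturated dense linear order: transposing the two endpoints of an edge, or of a non-edge, preserves the quantifier-free $\{R\}$-type while it must reverse $<$ in the target.

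There is no genuine obstacle to overcome; the only point that needs any care is to pick the index structure so that \emph{some} pair of distinct indices admits both of its orderings under a single quantifier-free type, which is automatic as soon as $L(I)$ carries no relation sensitive to the order of a $2$-tuple (as for a pure set, or for a graph language). This order-blind redundancy in $\mathrm{qftp}^{L(I)}$ is exactly the pathology that the hypotheses of the main results are designed to exclude: Theorem~\ref{8} indexes by an \emph{ordered} graph, and Theorem~\ref{7} demands that the age of the index be a Ramsey class.
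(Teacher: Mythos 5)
Your proof is correct and uses the same essential mechanism as the paper's: the paper takes $I$ to be the (unordered) random graph and $M=(\mathbb{Q},<)$, and derives a contradiction from the fact that swapping the endpoints of an edge preserves $\mathrm{qftp}^{\{R\}}$ but forces $\mathrm{tp}^{\{<\}}(a_i,a_j)=\mathrm{tp}^{\{<\}}(a_j,a_i)$, which is impossible in a linear order on distinct elements. Your pure-set variant is the same argument stripped to its core (and you even note the random-graph version explicitly), so this is essentially the paper's proof.
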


\begin{proof} Let $I:=\R$ be the random graph (without a linear ordering) in the pure graph language $L' := \{R\}$.   $(L', I)$-generalized indiscernibles do not exist within $(\mathbb{Q},<)$ (though the latter happens to be $||I||$-saturated.)  Any potential $I$-indiscernible $(a_i : i \in I)$ in $\mathbb{Q}$ would have the problem that for $i$ and $j$ contained in an edge of $I$, we would need to have tp$(a_i,a_j)$=tp$(a_j,a_i)$.  However, for any distinct length-$n$ tuples $\ov{a}$, $\ov{b}$ from $\mathbb{Q}$, one of $(\ov{a})_1$, $(\ov{b})_1$ must be less than the other.  Thus there will be some relation in the type of $\la \ov{a}, \ov{b} \ra$ that is not in the type of $\la \ov{b}, \ov{a} \ra$.  Thus tp$(a_i,a_j)$=tp$(a_j,a_i)$ yields a contradiction.
\end{proof}

\begin{obs}\label{303} For $L_2 \subseteq L_1$, any $L_2$-generalized indiscernible is automatically $L_1$-generalized indiscernible.
\end{obs}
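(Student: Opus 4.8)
The plan is simply to unwind the definition of generalized indiscernibility and observe that the hypothesis appearing there only becomes easier to satisfy as the index-language shrinks. First I would fix a finite $n$ and same-length tuples $\ov{i} = \la i_1, \ldots, i_n \ra$ and $\ov{j} = \la j_1, \ldots, j_n \ra$ from $I$ with $\textrm{qftp}^{L_1}(\ov{i}; I) = \textrm{qftp}^{L_1}(\ov{j}; I)$, this being precisely the hypothesis one is entitled to assume when verifying that $(a_i : i \in I)$ is $L_1$-generalized indiscernible.

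The key (and only) step is the elementary remark that, since $L_2 \subseteq L_1$, every quantifier-free $L_2$-formula is in particular a quantifier-free $L_1$-formula; hence the quantifier-free $L_2$-type of a tuple from $I$ is completely determined by its quantifier-free $L_1$-type. Thus $\textrm{qftp}^{L_1}(\ov{i}; I) = \textrm{qftp}^{L_1}(\ov{j}; I)$ entails $\textrm{qftp}^{L_2}(\ov{i}; I) = \textrm{qftp}^{L_2}(\ov{j}; I)$. Applying the hypothesis that $(a_i : i \in I)$ is $L_2$-generalized indiscernible to this last equality yields $\textrm{tp}^L(\ov{a}_{\ov{i}}; M) = \textrm{tp}^L(\ov{a}_{\ov{j}}; M)$, which is exactly the conclusion demanded by the definition of $L_1$-generalized indiscernibility; since $n$, $\ov{i}$, $\ov{j}$ were arbitrary, the verification is complete.

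I do not expect any genuine obstacle here: the statement is an immediate consequence of the fact that passing to a larger index-language can only refine index-types. The one point to keep straight is the direction of the implication in the definition of generalized indiscernible — it runs from equality of index-types to equality of target-types — so it is precisely the containment $L_2 \subseteq L_1$, and not its reverse, that makes the argument go through.
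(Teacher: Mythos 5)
Your argument is exactly the intended one: the paper proves this observation by simply noting it "follows immediately from the definition," and your unwinding — that equality of quantifier-free $L_1$-types implies equality of quantifier-free $L_2$-types when $L_2 \subseteq L_1$, so the $L_2$-indiscernibility hypothesis applies — is the correct and complete justification.
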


\begin{proof} This follows immediately from the definition.
\end{proof}

To determine the ``type'' of an $I$-indexed indiscernible $(a_i : i \in I)$, we need to know the type of $\la a_{i_1}, \ldots, a_{i_n} \ra$  in the target, not just for every $n$, but for every complete quantifier-free $(L', n)$-type $\eta(x_1, \ldots, x_n)$ such that $\la i_1, \ldots, i_n \ra \vDash \eta$.  The following is notation inspired by the presentation in \cite{ma02}.  Though this notation could be developed for unordered index structures, we choose to present it only for ordered structures, $I$.

\begin{dfn} Fix a finite relational $L'$ containing a relation for order, $<$.  Given an $L'$-structure $I$ linearly ordered by $<$ and an $(L', I)$-generalized indiscernible $\mathcal{I}:=(a_i: i \in I)$, define:

\vspace{.1in}

\begin{enumerate}
\item for any complete quantifier-free $L'$-type $\eta(v_1, \ldots, v_n)$ realized in $I$ that is consistent with $v_1 < \ldots, < v_n$, 

\vspace{.2in}

\hspace{-.7in} $(\ddag) \ \ p^\eta(\la a_i : i \in I \ra) = \{ \psi(x_1, \ldots, x_n) : \textrm{~there exists~} i_1 < \ldots < i_n \textrm{~from~} I$

\vspace{.1in}

\hspace{-.5in} $\textrm{~such that~} (i_1, \ldots, i_n) \vDash \eta(v_1, \ldots, v_n) \textrm{~and~} \la a_{i_1}, \ldots, a_{i_n} \ra \vDash \psi(x_1, \ldots, x_n) \}$

\vspace{.3in}

	\item  tp$_{L'}(\mathcal{I})$ := $\la p^\eta(\la a_i : i \in I \ra) : \eta \textrm{~is a~} \textrm{complete quantifier-free~} \right.$ 

\vspace{.1in}

\begin{flushright}
$\left. (L',n)\textrm{-type realized in~} I\textrm{~and consistent with~} v_1 < \ldots, < v_n, ~ n < \omega \ra$
\end{flushright}

\vspace{.2in} 

	\item $\textrm{tp}(\mathcal{I})$ to be $\textrm{tp}_{L(I)}(\mathcal{I})$, the \emph{general type} of $\mathcal{I}$.
\end{enumerate}

\vspace{.1in}

\end{dfn}

\begin{rmk}\label{130} 1. By generalized-indiscernibility, the identical set is defined by replacing ``there exists'' at ($\ddag$) by ``for all''.

2. For a generalized indiscernible $(a_i : i \in I)$, the $p^\eta(\la a_i : i \in I \ra)$ are consistent types.  
\end{rmk}

		\subsection{Mechanics}\label{202}

Given a sequence of parameters (possibly $L'$-generalized indiscernible, but often not) $(a_i : i \in I)$, we will want to know when an $L'$-generalized indiscernible, $(b_j : j \in J)$, obtains its definable structure locally from the $a_i$.

\begin{dfn}(based on) Fix $I$ an $L'$-structure and a sublanguage $L'' \subseteq L'$.  Fix a given set of parameters $(a_i : i \in I)$ in an $L$-structure $M \vDash T$, an $L'$-structure $J$ and an $L'$-generalized indiscernible sequence $(b_i : i \in J)$ in $M_1 \vDash T$.

We say that \emph{the $b_j$ are $L''$-based on the $a_i$} if for any finite set of $L$-formulas, $\Sigma$ and for any finite tuple from $I$, $\la s_1, \ldots, s_n \ra$, 

there exists an $L''$-isomorphic tuple $\la t_1, \ldots, t_n \ra$ in $J$, 

\begin{flushright}
such that tp$^{\Sigma}(\ov{b}_{\ov{s}}; M_1)$=tp$^{\Sigma}(\ov{a}_{\ov{t}}; M)$.
\end{flushright}

\vspace{.1in}

As a convention, if we say that an $I$-indexed indiscernible $(b_i : i \in I)$ is \emph{based on} some parameters, we mean that it is $L(I)$-based on them.
\end{dfn}

When we construct an $L'$-generalized indiscernible $(b_j : j \in J)$, the indiscernible makes a consistent choice: for a given quantifier-free $L'$-type $\eta(v_1, \ldots, v_n)$, every realization $\ov{j} \vDash \eta$, for $\ov{j}$ from $J$, must map to a realization $\ov{b}_{\ov{j}}$ of some fixed $n$-type in $M$, $p$.  In particular, all $\ov{j} \vDash \eta$ map to some fixed $\Sigma$-type $p | \Sigma$, for every finite subset $\Sigma \subset L$.

To say that the generalized indiscernible sequence is \emph{based on} a given set of parameters $(a_i : i \in I)$ is to say the consistent choice is a limited choice:  for every $\eta$ as above, the choice of $p | \Sigma$ must be made out of the set 

\vspace{.1in}

$X_{\Sigma, \ov{j}} := \{ r(x_1, \ldots, x_n) : \ov{a}_{\ov{i}} \vDash r(x_1, \ldots, x_n),$

\begin{flushright}
$r(\ov{x}) \textrm{~is a complete $\Sigma$-type,~} \textrm{and qftp}^{L'}(\ov{i}; I) = \textrm{qftp}^{L'}(\ov{j}; J) \}$. 
\end{flushright}

\vspace{.1in}

This is a useful technique that we often take advantage of when working with indiscernible sequences.  Given an indiscernible sequence witnessing the order property and indexed by a discrete order, it is possible to always find an indiscernible sequence, still witnessing the order property, but indexed by a dense linear order.  In other words, we ``stretch'' the indiscernible onto the index set $J:=\mathbb{Q}$.

\begin{obs}\label{91} Note that for two $L'$-generalized indiscernibles $\mathcal{I}:=(a_i : i \in I)$ and $\mathcal{J}:=(b_j : j \in J)$, the $b_j$ are $L''$-based on the $a_i$ just in case:
\begin{enumerate}
\item every complete quantifier-free $L''$-type realized in $J$ is realized in $I$, and
\item for every complete quantifier-free $L''$-type $\eta$ realized in $J$, \\ $p^\eta(\la b_j : j \in J \ra) = p^\eta(\la a_i : i \in I \ra)$, 

i.e. tp($\mathcal{I} \uphp (\eta \textrm{~realized in~} J$))=tp($\mathcal{J}$)
\end{enumerate}

For applications in the case that $L'' := L'$, it is useful to note that the condition ``every complete quantifier-free $L'$-type realized in $J$ is realized in $I$'' is equivalent to the condition age$(J)$ $\subseteq$ age$(I)$. (See Section \ref{206} for a precise definition of \emph{age}.)
\end{obs}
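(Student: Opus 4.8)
The plan is to establish the equivalence by unwinding the definitions of ``based on'' and of the $p^\eta(\cdot)$ notation, using two facts already in hand: by Observation \ref{303}, each of $\mathcal{I}$ and $\mathcal{J}$, being $L'$-generalized indiscernible, is in particular $L''$-generalized indiscernible; and by Remark \ref{130} the ``there exists'' in the clause $(\ddag)$ defining $p^\eta$ may be read as ``for all''.  (Throughout, for a complete quantifier-free $L''$-type $\eta$, $p^\eta(\la b_j : j \in J \ra)$ denotes the evident analogue of $(\ddag)$ --- the set of $L$-formulas satisfied by $\ov{b}_{\ov{s}}$ for some $\ov{s}$ from $J$ realizing $\eta$ --- and similarly for $\mathcal{I}$.)  The single fact driving both directions is the resulting \emph{dictionary}: for a tuple $\ov{s}$ from $J$ with $\eta := \textrm{qftp}^{L''}(\ov{s};J)$ and any $L$-formula $\psi$, generalized indiscernibility of $\mathcal{J}$ yields $\ov{b}_{\ov{s}} \vDash \psi \iff \psi \in p^\eta(\la b_j : j \in J \ra)$, and symmetrically for $\mathcal{I}$.

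For $(\Rightarrow)$, assume the $b_j$ are $L''$-based on the $a_i$.  If $\eta$ is a complete quantifier-free $L''$-type realized in $J$ by some $\ov{s}$, applying ``based on'' (with $\Sigma = \emptyset$) gives an $L''$-isomorphic $\ov{t}$ from $I$, so $\ov{t} \vDash \eta$ and $\eta$ is realized in $I$; this is (1).  For (2), fix such an $\eta$.  Given $\psi \in p^\eta(\la b_j : j \in J \ra)$, choose $\ov{s} \vDash \eta$ from $J$ with $\ov{b}_{\ov{s}} \vDash \psi$ and, via ``based on'' with $\Sigma = \{\psi\}$, an $L''$-isomorphic $\ov{t}$ from $I$ with $\textrm{tp}^{\{\psi\}}(\ov{b}_{\ov{s}};M_1) = \textrm{tp}^{\{\psi\}}(\ov{a}_{\ov{t}};M)$; then $\ov{a}_{\ov{t}} \vDash \psi$ and $\ov{t} \vDash \eta$, so $\psi \in p^\eta(\la a_i : i \in I \ra)$.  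Conversely, given $\psi \in p^\eta(\la a_i : i \in I \ra)$, Remark \ref{130} upgrades this to ``$\ov{a}_{\ov{t}'} \vDash \psi$ for every $\ov{t}' \vDash \eta$ in $I$''; picking any $\ov{s} \vDash \eta$ in $J$ (possible since $\eta$ is realized in $J$) and, via ``based on'', an $L''$-isomorphic $\ov{t}$ in $I$ with $\textrm{tp}^{\{\psi\}}(\ov{b}_{\ov{s}};M_1) = \textrm{tp}^{\{\psi\}}(\ov{a}_{\ov{t}};M)$, we get $\ov{a}_{\ov{t}} \vDash \psi$, hence $\ov{b}_{\ov{s}} \vDash \psi$ and $\psi \in p^\eta(\la b_j : j \in J \ra)$.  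Thus $p^\eta(\la b_j : j \in J \ra) = p^\eta(\la a_i : i \in I \ra)$.

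For $(\Leftarrow)$, assume (1) and (2), and let $\Sigma \subseteq L$ be finite and $\ov{s}$ a tuple from $J$.  Put $\eta := \textrm{qftp}^{L''}(\ov{s};J)$; by (1) there is an $L''$-isomorphic $\ov{t}$ from $I$, i.e.\ $\ov{t} \vDash \eta$.  For each $\psi \in \Sigma$, the dictionary and (2) give $\ov{b}_{\ov{s}} \vDash \psi \iff \psi \in p^\eta(\la b_j : j \in J \ra) = p^\eta(\la a_i : i \in I \ra) \iff \ov{a}_{\ov{t}} \vDash \psi$, so $\textrm{tp}^\Sigma(\ov{b}_{\ov{s}};M_1) = \textrm{tp}^\Sigma(\ov{a}_{\ov{t}};M)$, which is exactly what ``based on'' requires.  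Finally, the parenthetical about ages is immediate: a finite linearly ordered $L'$-structure on $n$ elements is coded by a complete quantifier-free $(L',n)$-type consistent with $v_1 < \cdots < v_n$, and an embedding of it into $J$ is just a realization of that type in $J$; hence ``every complete quantifier-free $L'$-type realized in $J$ is realized in $I$'' says precisely that every finite substructure of $J$ embeds into $I$, i.e.\ $\textrm{age}(J) \subseteq \textrm{age}(I)$.

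There is no real obstacle here: the statement repackages the definitions.  The one point needing care is the systematic use of Remark \ref{130} to pass between ``$\ov{b}_{\ov{s}} \vDash \psi$ for the particular $\ov{s}$ at hand'' and ``$\psi \in p^\eta(\la b_j : j \in J \ra)$'' (and likewise on the $\mathcal{I}$ side), since the clause $(\ddag)$ is phrased with an existential quantifier over index tuples.
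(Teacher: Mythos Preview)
Your argument is correct and is exactly the natural unwinding of the definitions; the paper itself does not give a proof of this observation, leaving it as a direct consequence of the definitions of ``based on'' and $p^\eta$, which is precisely what you supply.  Your explicit note that $p^\eta$ for an $L''$-type $\eta$ means the evident analogue of $(\ddag)$ is a useful clarification, since the paper only defines $p^\eta$ for $L'$-types consistent with $v_1<\cdots<v_n$, and your invocation of Remark~\ref{130} (and Observation~\ref{303}) to pass freely between ``some $\ov{s}\vDash\eta$'' and ``all $\ov{s}\vDash\eta$'' is the one nontrivial step, handled correctly.
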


\begin{rmk}\label{304}Though we usually use the property $L'$-based on rather than $L''$-based on for some reduct $L'' \subsetneq L'$, the distinction is still an important one.  For a linearly ordered $L'$-structure $I$, the finite Ramsey theorem for sets produces an indiscernible sequence that is $L_{\{<\}}$-based on an initial set of parameters $(a_i : i \in I)$.  By Remark \ref{303}, this sequence is automatically $L'$-generalized indiscernible, however, it is not necessarily $L'$-based on the $a_i$.  This fact has consequences for what kind of definable structure from $M$ we can capture within an $L'$-generalized indiscernible sequence.  In brief, for maximal control over our $I$-indexed indiscernible, we need the right kind of partition theorem for finite substructures of $I$.
\end{rmk}

		\subsection{Stretching generalized indiscernibles}\label{212}

We mentioned that in the case of indiscernible sequences, we often wish to ``stretch'' the indiscernible onto a more-saturated index set.  In ``stretching'' an $I$-indexed indiscernible, we pass to a $J$-indexed indiscernible in such a way as to preserve the type of the indiscernible.   For an arbitrary language $L'$ extending order, we interpret this to mean that the $J$-indexed indiscernible is $L'$-based on the $I$-indexed indiscernible, i.e. shares the same general type.

There is an extra condition in the next lemma that is not explicitly seen in the indiscernible sequence case.  An indiscernible sequence $(a_i : i \in I)$ can be ``stretched'' to a new linearly ordered index $(J, \prec)$ provided that $I$ is infinite (e.g., given $I = (\omega, \in)$ we can ``stretch'' to $J = (\kappa, \in)$ for any infinite cardinal $\kappa$.)   In the general case, we must require that $I$ be not only infinite, but that age($I$) $\supseteq$ age$(J)$.  Because any infinite linear order $I$ has maximal age among ${<}$-structures modeling the theory of order, no such condition need be explicitly stated for that case.

\begin{lem}[{\citep[{chap.~VII, Lemma 2.2}]{sh78}}]\label{4} (Stretching the generalized indiscernible sequence) Suppose $I$ is an $L'$-structure for finite relational $L'$.  Suppose we have an $(L', I)$-generalized indiscernible $(a_i: i \in I)$ in some $L$-structure $M$.  

For any $L'$-structure $J$ with age($J$) $\subseteq$ age($I$), we can find $(L', J)$-indiscernible $(b_j : j \in J)$ in some structure $M_1 \vDash \textrm{Th}(M)$ that is $L'$-based on the $a_i$.
\end{lem}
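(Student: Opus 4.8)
The plan is the standard compactness construction: take the general type $\textrm{tp}_{L'}(\mathcal{I})$ of $\mathcal{I} := (a_i : i \in I)$ as a blueprint and realize it along $J$. Before doing so I would record what the age hypothesis buys. Since $I$ is linearly ordered by $<$ and $\textrm{age}(J) \subseteq \textrm{age}(I)$, every finite substructure of $J$ lies in $\textrm{age}(I)$ and hence is linearly ordered by $<$; as ``being a linear order'' is a universal property verified on substructures of size $\leq 3$, $J$ itself is linearly ordered by $<$. Moreover, as noted just after Observation \ref{91}, $\textrm{age}(J) \subseteq \textrm{age}(I)$ says precisely that every complete quantifier-free $L'$-type realized in $J$ is realized in $I$. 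Hence, for each $<$-increasing tuple $\ov{j} = \la j_1 < \dots < j_n \ra$ from $J$, the type $\eta_{\ov{j}} := \textrm{qftp}^{L'}(\ov{j}; J)$ is realized in $I$ and consistent with $v_1 < \dots < v_n$, so $p^{\eta_{\ov{j}}}(\la a_i : i \in I \ra)$ is a well-defined, consistent, complete $L$-type (Remark \ref{130}).

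Next I would expand $L$ by a fresh constant $c_j$ (standing, by the usual convention, for the possibly-tuple $a_j$) for each $j \in J$, and let $T^\ast$ be the theory in this language consisting of $\textrm{Th}(M)$ together with, for every $<$-increasing $\ov{j} = \la j_1 < \dots < j_n \ra$ from $J$ and every $n$-ary $L$-formula $\psi$, the sentence $\psi(c_{j_1}, \dots, c_{j_n})$ if $\psi \in p^{\eta_{\ov{j}}}(\la a_i : i \in I \ra)$ and $\neg\psi(c_{j_1}, \dots, c_{j_n})$ otherwise. The $L$-type of an arbitrary tuple of these constants is then forced by permutation of variables; and since $(a_i)$ is nontrivial, $x_1 \neq x_2$ lies in the $2$-type $p^\eta$ attached to any pair, so $c_j \neq c_{j'} \in T^\ast$ whenever $j \neq j'$.

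To see $T^\ast$ is finitely satisfiable, take a finite $T_0 \subseteq T^\ast$, mentioning constants among $c_{j_1}, \dots, c_{j_m}$ with $j_1 < \dots < j_m$. By the first paragraph $\eta := \textrm{qftp}^{L'}(j_1, \dots, j_m; J)$ is realized in $I$, so pick $i_1 < \dots < i_m$ in $I$ with $\textrm{qftp}^{L'}(i_1, \dots, i_m; I) = \eta$ and expand $M$ by $c_{j_k} \mapsto a_{i_k}$ (remaining constants arbitrary). Every increasing subtuple $\ov{j}' = \la j_{k_1} < \dots < j_{k_l} \ra$ occurring in $T_0$ has $\textrm{qftp}^{L'}(i_{k_1}, \dots, i_{k_l}; I) = \textrm{qftp}^{L'}(\ov{j}'; J) =: \eta'$ (restriction of a quantifier-free type to a subtuple being well-defined as $L'$ is relational), so by $(L', I)$-generalized indiscernibility and the ``for all'' form of $(\ddag)$ in Remark \ref{130}, $\ov{a}_{\ov{i}'}$ realizes $p^{\eta'}(\la a_i : i \in I \ra)$; hence the expanded $M$ satisfies every requirement $T_0$ imposes on $\ov{c}_{\ov{j}'}$, as well as the finitely many $L$-sentences of $\textrm{Th}(M)$ in $T_0$. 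By compactness $T^\ast$ has a model $N$; set $M_1 := N \uphp L$ (so $M_1 \vDash \textrm{Th}(M)$) and $b_j := c_j^N$.

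It remains to verify the two conclusions, and this is routine. For $(L', J)$-generalized indiscernibility: if $\textrm{qftp}^{L'}(\ov{j}; J) = \textrm{qftp}^{L'}(\ov{j}''; J)$ then, $<$ being in $L'$, the two tuples are sorted into increasing order by the same permutation and their increasing rearrangements realize the same $\eta$, so $\textrm{tp}^L(\ov{b}_{\ov{j}}; M_1)$ and $\textrm{tp}^L(\ov{b}_{\ov{j}''}; M_1)$ are both the appropriate variable-permutation of $p^\eta(\la a_i : i \in I \ra)$; nontriviality was arranged in the second paragraph. That the $b_j$ are $L'$-based on the $a_i$ then follows from Observation \ref{91}: its condition (1) is the age hypothesis, and its condition (2), $p^\eta(\la b_j : j \in J \ra) = p^\eta(\la a_i : i \in I \ra)$ for every quantifier-free $L'$-type $\eta$ realized in $J$ and consistent with $v_1 < \dots < v_n$, is exactly what $T^\ast$ was built to force. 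The one step carrying genuine content, rather than bookkeeping, is the consistency of the blueprint $T^\ast$: the prescriptions it makes about overlapping subtuples must cohere. That coherence is precisely $(L', I)$-generalized indiscernibility, and it is cashed out in the finite-satisfiability step by pulling an entire finite fragment $\{j_1, \dots, j_m\}$ of $J$ back into $I$ simultaneously; I expect that step — together with the minor care needed for the order-sorting permutations and for collapsing repeated coordinates (which reduces to the distinct-coordinate case via nontriviality and equality) — to absorb all the work.
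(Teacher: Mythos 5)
Your proof is correct and follows essentially the same route as the paper's: both build the theory $\textrm{Th}(M)$ together with sentences pinning the $L$-type of each $\ov{c}_{\ov{j}}$ to $p^{\eta_{\ov{j}}}(\la a_i : i \in I\ra)$ (your explicit "add $\psi$ or $\neg\psi$" and the paper's "add all $\theta \in p^\eta$" coincide since $p^\eta$ is a complete type), then verify finite satisfiability by pulling an increasing tuple from $J$ back to an isomorphic tuple in $I$ via $\textrm{age}(J) \subseteq \textrm{age}(I)$ and interpreting the constants as the corresponding $a_i$. The only difference is cosmetic — you front-load the completeness of the prescription, which makes the closing indiscernibility check a one-liner rather than the paper's short contradiction argument.
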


\begin{proof} By Remark \ref{91}, in order to find an $L'$-generalized indiscernible $L'$-based on the $a_i$, since it is already assumed that age$(J) \subseteq$ age$(I)$, we need only exhibit an $L'$-generalized indiscernible $(b_j : j \in J)$, such that for every complete quantifier-free $L'$-type $\eta$ realized in $J$, $p^\eta(\la b_j : j \in J \ra) = p^\eta(\la a_i : i \in I \ra)$. 

Consider the following type $\Sigma$ in new constants $\{c_j : j \in J\}$.  It is convenient to enumerate the $c_j$ as $\{ c_{j(k)} : k < \kappa \}$ for $\kappa := ||J||$ (not necessarily compatible with the definable order in $J$.)

\vspace{.1in}

$\Sigma := \textrm{Th}(M) \ \cup\ \{\theta(c_{j(k_1)}, \ldots, c_{j(k_m)}) : m < \omega; \textrm{~distinct~} k_1, \ldots, k_m \in \kappa;$ \\

\vspace{.05in}

		\hspace{.5in} $\la j_{k(1)}, \ldots, j_{k(m)} \ra \vDash \eta(v_1, \ldots, v_m); \textrm{~and~} \theta(x_1, \ldots, x_ m) \in p^{\eta}(I) \}$

\vspace{.1in}

\begin{clm} A realization of $\Sigma$ in a structure $M_1$, $b_j := c_j^{M_1}$ yields an $L'$-generalized indiscernible $(b_j : j \in J)$ such that (i) the $b_j$ derive from some $M_1 \vDash \textrm{Th}(M)$ and (ii) $p^\eta(\la b_j : j \in J \ra) = p^\eta(\la a_i : i \in I \ra)$ for all complete quantifier-free $L'$-types $\eta$ realized in $J$.\end{clm}

\begin{proof} That (i) and (ii) hold is straightforward.  To see the generalized indiscernibility, fix $n$, and let the $\eta_i$, $1 \leq l \leq k$ enumerate the complete quantifier-free $(L', n)$-types realized in $J$.  Fix $l$.  If there are $\ov{j}$, $\ov{j'}$ of length $n$ satisfying $\eta_l$ from $J$ such that tp($\ov{b}_{\ov{j}};M_1$) $\neq$ tp$(\ov{b}_{\ov{j}'}; M_1)$, then there is some $\varphi$ from $L$ such that, without loss of generality, $\varphi(\ov{b}_{\ov{j}})$ and $\neg \varphi(\ov{b}_{\ov{j}'})$.  But then tp$_{\eta_l}(\la a_i : i \in I \ra)$ contains $\varphi$ and $\neg \varphi$ and is thus not a consistent type, contradicting the second part of Remark \ref{130}.
\end{proof}

\noindent It remains to show that $\Sigma$ is finitely satisfiable.  We can satisfy any finite subtype of $\Sigma$ provided that we can satisfy every subtype of the form $\textrm{Th}(M) \cup F$, where $F$ is finite.  For any such $F$, there is some finite list of constants and $L$-formulas occuring in $F$.  We may assume that these are $c_{j(0)}, \ldots, c_{j(k-1)}$ and $\theta_0, \ldots, \theta_{l-1}$.  

Let $\textrm{qftp}^{L'}(j(0), \ldots, j(k-1); J) =: \eta$.  This type is realized by some sequence $\la i(0), \ldots, i(k-1) \ra$ from $I$, as age$(J)$ $\subseteq$ age$(I)$.  Interpret the $c_{j(s)}$ as $a_{i(s)}$.  Then the expansion of $M$ to $L \cup \{ c_{j(0)}, \ldots, c_{j(k-1} \}$ under this interpretation satisfies $\textrm{Th}(M) \cup F$.
\end{proof}

For the next few corollaries we need a few definitions.

\begin{dfn} A quantifier-free weakly-saturated model $M$ of a theory $T$ is a model $M \vDash T$ that realizes all complete quantifier-free types in the language of the model consistent with $T$.
\end{dfn}

\begin{dfn} By $\R^<$ we mean the \emph{random ordered graph}.  This is the Fra\"{i}ss\'{e} limit of all finite ordered symmetric graphs (with no loops.)
\end{dfn}

\begin{cor}  Suppose $T'$ is a universal theory in a finite relational language, $L'$.  If $I$ is a quantifier-free weakly-saturated model of $T'$, then given some $I$-indexed indiscernible in $M$ and $J \vDash T'$, we can find a $J$-indexed indiscernible based on the $I$-indexed indiscernible, in some $M_1$ satisfying the same theory as $M$.
\end{cor}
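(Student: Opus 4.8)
The plan is to reduce the statement immediately to Lemma~\ref{4}, whose only hypothesis on $J$ is that $\mathrm{age}(J) \subseteq \mathrm{age}(I)$; so the entire proof comes down to verifying that inclusion. First I would unwind the quantifier-free weak saturation of $I$: by definition $\mathrm{age}(I) = \{A \vDash T' : A \text{ finite}\}$, so it suffices to show that every finite substructure of $J$ is isomorphic to a finite model of $T'$. This is exactly where universality is used. Since $T'$ is universal, $J \vDash T'$, and universal sentences pass down to substructures, every $A \subseteq J$ satisfies $T'$; in particular every finite $A \subseteq J$ lies in $\{A \vDash T' : A \text{ finite}\} = \mathrm{age}(I)$, so $\mathrm{age}(J) \subseteq \mathrm{age}(I)$. (If one prefers the alternative reading of weak saturation---that $I$ realizes every complete quantifier-free type consistent with $T'$---the inclusion $\{A \vDash T' : A \text{ finite}\} \subseteq \mathrm{age}(I)$ is equally routine: the quantifier-free diagram of a finite $A \vDash T'$, viewed as a complete quantifier-free type over $\emptyset$, is consistent with $T'$, and realizing it in $I$ by a tuple $\ov b$ yields---because $L'$ is relational---a substructure of $I$ on the entries of $\ov b$ isomorphic to $A$.)

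With $\mathrm{age}(J) \subseteq \mathrm{age}(I)$ in hand, I would apply Lemma~\ref{4} to the given $I$-indexed indiscernible $(a_i : i \in I)$---which, by our definitions, is an $(L(I), I)$-, i.e. $(L', I)$-, generalized indiscernible in $M$---to produce an $(L', J)$-generalized indiscernible $(b_j : j \in J)$ in some $M_1 \vDash \mathrm{Th}(M)$ that is $L'$-based on the $a_i$. Since $L(J) = L'$, this $(b_j : j \in J)$ is a $J$-indexed indiscernible, and ``$L'$-based on'' coincides with ``based on'' in our conventional sense; hence $(b_j : j \in J)$ is the desired $J$-indexed indiscernible based on the given $I$-indexed one, living in a model $M_1$ with $\mathrm{Th}(M_1) = \mathrm{Th}(M)$.

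I expect no genuine obstacle: the substantive work is entirely inside Lemma~\ref{4}, and the only thing to check is the age inclusion, whose sole nontrivial ingredient is the standard preservation of universal theories under substructures. The only points requiring care are cosmetic---matching ``based on'' in the conclusion with the ``$L'$-based on'' delivered by Lemma~\ref{4} (immediate once $L(J) = L'$ is noted), and, under the type-realization formulation of weak saturation, observing that $L'$ relational makes the substructure generated by a tuple equal to the substructure on that tuple, so that realizing the quantifier-free diagram of $A$ genuinely embeds a copy of $A$ into $I$.
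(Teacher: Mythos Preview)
Your proposal is correct and matches the paper's proof essentially line for line: the paper also reduces to Lemma~\ref{4} by observing that universality of $T'$ forces every finite substructure of $J$ to model $T'$, whence $\mathrm{age}(J) \subseteq \{B : B \vDash T', B \text{ finite}\} = \mathrm{age}(I)$. Your additional remarks on the alternative reading of weak saturation and on matching ``based on'' with ``$L'$-based on'' are fine but go beyond what the paper records.
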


\begin{rmk} By inspection of the proof for Lemma \ref{4}, one may additionally require that $M \subseteq M_1$ or $M \preceq M_1$ by adding Diag$_{|M|}(M)$ or DiagEl$_{|M|}(M)$ to $\Sigma$, using new constants for the elements of $M$.
\end{rmk}

\begin{proof} Since $T'$ is universal in a finite relational language, given $J \vDash T'$, any finite substructure $A \subset J$ models $T'$, as well.  Thus age$(J)$ = $\{A : \textrm{finite
~} A \subset J \} \subseteq \{B : \textrm{finite~}  B \vDash T'\}$ = age$(I)$.  Now apply the Lemma \ref{4}.
\end{proof}

\begin{cor}\label{302}
In particular, for $I$ a quantifier-free weakly-saturated ordered graph, for any ordered graph $J$ we can find $J$-indiscernibles based on the $I$-indexed indiscernible. 
\end{cor}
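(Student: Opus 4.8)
The plan is to obtain Corollary~\ref{302} as a direct instance of the preceding corollary, taking $T' := T_g$ and $L' := L_g = \{<, R\}$. First I would verify that $T_g$ satisfies the hypotheses of that corollary: $L_g$ is finite and relational, and $T_g$ is a universal theory, since its axioms --- that $<$ is a linear order (irreflexive, transitive, total) and that $R$ is symmetric and loop-free --- are all expressible by universal sentences. So the general corollary applies with these choices.

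Next I would observe that the two notions of ``quantifier-free weakly-saturated model of $T_g$'' in play coincide here: for a universal theory $T'$ in a finite relational language, a model realizes every complete quantifier-free type consistent with $T'$ if and only if its age equals $\{A \vDash T' : A \text{ finite}\}$, because substructures of models of a universal theory are again models of that theory. Hence a ``quantifier-free weakly-saturated ordered graph'' $I$ is precisely a quantifier-free weakly-saturated model of $T_g$, with age$(I) = \K_g$; the random ordered graph $\R^<$ is one such $I$. Likewise, an ``ordered graph'' $J$ is exactly a model of $T_g$, so $J \vDash T'$.

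Then, given an $I$-indexed indiscernible in a target model $M$, the preceding corollary applies and produces a $J$-indexed indiscernible based on it inside some $M_1$ with $\textrm{Th}(M_1) = \textrm{Th}(M)$. I expect no real obstacle: the entire content lies in Lemma~\ref{4} and the previous corollary, and the only point worth a line is that age$(J) \subseteq$ age$(I)$, which is immediate since every finite substructure of an ordered graph is itself a finite ordered graph, hence a member of $\K_g = $ age$(I)$ --- exactly the inclusion extracted in the previous corollary's proof from the universality of $T'$.
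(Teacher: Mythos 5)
Your proposal is correct and matches the paper's (implicit) argument: Corollary~\ref{302} is indeed just the instance $T' := T_g$, $L' := L_g$ of the immediately preceding corollary, whose hypotheses you correctly verify ($T_g$ universal in the finite relational language $L_g$, and the two formulations of quantifier-free weak saturation coinciding for universal theories in finite relational languages). The extra care you take in reconciling the paper's two definitions of ``quantifier-free weakly-saturated'' is a reasonable addition but does not change the route.
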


\begin{rmk} We will use Corollary \ref{302} in the case that $J:=\R^{<}$, the random ordered graph.
\end{rmk}

		\subsection{The modeling property}\label{203}

In light of Remark \ref{304}, we would like to crystallize the notion of when we have ``maximal'' control over the general type, tp$(\mathcal{I})$, of an $I$-indexed indiscernible, $\mathcal{I} := (b_i : i \in I)$.  

\begin{dfn} Fix an $L'$-structure $I$.  We say that \emph{$I$-indexed indiscernibles have the modeling property} if given any parameters $(a_i : i \in I)$ in a structure $M$, there exist $L'$-generalized indiscernible $(b_i : i \in I)$ in $M_1 \vDash$ Th($M$) based on the $a_i$.
\end{dfn}

\noindent The above property is clearly stated for the particular case of full binary tree-indiscernibles in \cite{dzsh04}, and the easy generalization stated above proves quite useful.  Suppose we are given an $L'$-structure $I$, and an injected copy $(a_i : i \in I)$ of $I$ in $M$ where for every finite tuple $\ov{i}$ from $I$, $M$ colors $\ov{i}$ a unique color, $c(\ov{i})$ based on the type of $\ov{a}_{\ov{i}}$, one that distinguishes between elements of the \emph{finest} partition on $I$ by quantifier-free $L'$-types.  The modeling property tells us that we can find a \emph{homogeneous set} for this coloring.  In other words, we find an $I$-indexed indiscernible $(b_i : i \in I)$ so that no $\ov{b}_{\ov{i}}$, $\ov{b}_{\ov{j}}$ are collapsed to the same color in $M$ unless $\ov{i} \cong \ov{j}$ in $I$.

The modeling property is a generalization of a property that indiscernible sequences naturally inherit by way of the Ramsey theorem for finite sequences.  This is the property that for a given infinite sequence of parameters in some model (indexed by a  linear order that is not necessarily definable in the model) we can find an indiscernible sequence that is ``finitely modeled'' on this given sequence: i.e., for any finite increasing sequence from the indiscernible and any finite fragment of the language of the model, $\Delta$, this sequence has the same $\Delta$-type as some increasing sequence from the original set of parameters.  In other words, we can find an indiscernible sequence \emph{based on} the initial set of parameters.  

For the application we are most interested in -- characterizing NIP theories, in Ch.~5 -- we happen to know that our generalized indiscernibles have the modeling property, due to special properties of the index model.  

For the following we wish to state our definition of age:

\begin{dfn5} For an $L$-structure $M$, \emph{age$(M)$} is defined as the class of all $L$-structures isomorphic to a finitely-generated substructure of $M$.

We say that $J$ is an \emph{age} if it is \emph{age$(M)$} for some structure, $M$.
\end{dfn5}

We have the following relativization of the modeling property, not to a particular structure $I$, but to an age of $L'$-structures, $\Kk$.

\begin{dfn}\label{105} Fix a finite relational language $L'$.  Given an age $\Kk$ of $L'$-structures we say that \emph{$L'$-generalized indiscernibles have the modeling property for $\Kk$} if 
\begin{itemize}
\item[(1)] given any set of parameters $(a_i : i \in I)$ in a model $M$, indexed by an $L'$-structure $I$ such that age$(I)$=$\Kk$,

 we may find 
\item[(2)] $(b_j : j \in J)$ in a model $M_1 \vDash \textrm{Th}(M)$ that are
\begin{enumerate}
\item $L'$-generalized indiscernible,
\item indexed by an $L'$-structure $J$ such that age$(J)$ = $\Kk$, and 
\item based on the $a_i$.
\end{enumerate}
\end{itemize}
\end{dfn}

		\section{Ramsey classes}\label{204}

At this juncture, it is helpful to introduce the Ne\v{s}et\v{r}il-R\"{o}dl notion of ``Ramsey class''\footnote{This notation is from \cite{ne05}, though the same notion is originally referred to as ``partition category'' in \cite{rone77}} and related definitions.  We will be using a partition property of certain classes of finite structures proved simultaneously in \cite{rone77, abha78}.  Let $L$ be a finite, relational language containing a binary relation $\{<\}$ for order.  Let $\Vv$ be the class of all finite $L$-structures.  We will be considering the question of which subclasses $\Uu$ of $\Vv$ are Ramsey classes.  Though the notion of ``Ramsey class'' can be stated more generally for categories, we state a more specific case where the category is always one of finite first-order structures as objects, and embeddings as maps.

We develop what it means for a subclass $\Uu$ of $\Vv$ to be \emph{Ramsey} in a series of definitions.

For structures $B$, $C$, $B \subseteq C$ will always mean that $B$ is a \emph{substructure} of $C$.  Define an \emph{$A$-substructure of $C$} to be a substructure $A' \subseteq C$ isomorphic to $A$ where we do not reference a particular enumeration of $A'$.  More formally, 

\begin{dfn}[\cite{ne05}]\label{30} For $L$-structures $A$ and $C$, an \emph{$A$-substructure of $C$} is an equivalence class $[f]_E$ of $L$-embeddings $f: A \rightarrow C$, under the equivalence relation $f E g$ if $f=gh$ for some $h: A \rightarrow A$ a $L$-automorphism.

We refer to the set of $A$-substructures of $C$ as $C \choose A$.
\end{dfn}

\begin{dfn}\label{47}  For an integer $k>0$, by a \emph{$k$-coloring of $C \choose A$} we mean a function $f: {C \choose A} \rightarrow \eta$, where $\eta$ is some set of size $k$, typically $\eta := \{1, \ldots, k\}$.
\end{dfn}

\begin{rmk} In the case that all structures in $\Uu$ are linearly ordered by a relation $<$ in $L$, the equivalence classes in Definition \ref{30} have size 1, since the only $L$-automorphism of $A$ is the identity map.  In this case, a copy of $A$, $A' \subseteq B$ can be identified with the unique embedding that maps $A$ into $B$.  Thus, coloring $A$-substructures of $C$ is equivalent to coloring embeddings of $A$ into $C$. (See \cite{ne05} for more discussion on this.)
\end{rmk}

\begin{dfn} Let $A, B, C$ be objects in $\Uu$ and $k$ some positive  integer.  

By $$C \rightarrow (B)^A_k$$
we mean that for all $k$-colorings of $C \choose A$, $f$, there is a $B' \subseteq C$, where $B'$ is $L$-isomorphic to $B$ and all $A$-substructures of $B'$ are colored the same color, under the restriction of the $k$-coloring to $B'$: i.e. $f \uphp {B' \choose A}$ is a constant function.

If, for a particular coloring $f: { C \choose A} \rightarrow k$ we have a $B' \subseteq C$ such that $f \uphp {B' \choose A}$ is a constant function, we say that \textit{$B'$ is homogeneous for this coloring (homogeneous for $f$)}.

We may say that \textit{$C$ is Ramsey for $(B,A,k)$}, reading $(B)^A_k$ clockwise, from the left.
\end{dfn}

Now, we are ready to state the definition of \emph{Ramsey class}:

\begin{dfn}\label{44} Let $\Uu$ be a class of $L$-structures, for some finite relational language, $L$. $\Uu$ is a \emph{Ramsey class} if for any $A$, $B$ $\in$ $\Uu$ and positive integer $k$, there is a $C$ in $\Uu$ such that $C \rightarrow (B)^A_k$.
\end{dfn}

		\subsection{Ramsey subclasses}\label{206}

Here we state some known results about Ramsey classes.  We also mention briefly the case of $\Uu \subsetneq \Vv$, though it is not used in our result.  We begin by recalling some model-theoretic definitions that may be found in \cite{ho93}.  We abbreviate the joint embedding property by JEP and the amalgamation property by AP.

\begin{dfn}(age\footnote{We choose to close the age of a structure under isomorphism.})\label{404}  For an $L$-structure $M$, \emph{age$(M)$} is defined as the class of all $L$-structures isomorphic to a finitely-generated substructure of $M$.

We say that $J$ is an \emph{age} if it is \emph{age$(M)$} for some structure, $M$.
\end{dfn}

\begin{dfn} Refer to a configuration $A, B_1, B_2 \in \Uu$ with  $L$-embeddings $f_i : A \rightarrow B_i$ as an \emph{amalgamation base}.  

$\Uu$ has \emph{strong amalgamation} if it has AP in addition to which, for any amalgamation base $A, B_1, B_2, f_1, f_2$ we may always choose a ``strong amalgamation'': i.e. $C, g_1, g_2$ such that $g_1(B_1) \cap g_2(B_2) = (g_1 \circ f_1)(A)$. 
\end{dfn}

\begin{dfn} Let $L$ be some language and $U$ a class of finitely generated $L$-structures closed under isomorphism and substructure.  We say that $U$ is an \emph{amalgamation class} if it has both JEP and AP.
\end{dfn}

The following is useful when dealing with Ramsey subclasses:

\begin{dfn} Let $L$ be a finite relational language containing a binary relation symbol $<$ for order, and let $L^{-}$ be the reduct of $L$ to the unordered language.  Given a class $\Kk$ of $L^{-}$-structures, by $\ovr{\Kk}$ we mean all linearly ordered expansions of structures in $\Kk$ to $L$-structures.
\end{dfn}

To state our first known Ramsey-class result, we need to pinpoint our notion of hypergraphs:

\begin{dfn} Given an $n$-ary relation $R(x_1, \ldots, x_n)$ in $L$ and an $L$-structure $M$ we say that
\begin{enumerate}
\item $R$ is \textit{antireflexive on $M$} if $\neg R(a_1, \ldots, a_n)$ holds in $M$ whenever the $a_k$ are from $M$ and $a_i = a_j$ for some $1 \leq i \neq j \leq n$.
\item $R$ is \textit{symmetric on $M$} if $R(a_1, \ldots, a_n)$ implies $R(a_{\sigma(1)}, \ldots, a_{\sigma(n)})$ whenever the $a_k$ are from $M$ and $\sigma \in$ Sym($[n]$). 
\end{enumerate}

\noindent For finite relational $L$, we call an $L$-structure $A$ an \textit{($L$-)hypergraph structure} if all relations from $L$ are interpreted as symmetric and antireflexive on $A$. 
\end{dfn}

The following is a particular case of the $\nr$ theorem:

\begin{thm}[\cite{rone77}]\label{32} If $L$ is a finite relational language containing a binary relation $<$ for order, $L^{-} := L \setminus \{<\}$, and $\Vv$ is the class of all finite $L^{-}$-hypergraph structures, then $\ovr{\Vv}$ is a Ramsey Class.
\end{thm}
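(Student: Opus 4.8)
The statement is the $\nr{}$ partition theorem \citep{rone77} for ordered systems of set systems; the plan is to follow the classical \emph{partite construction}, which manufactures a partition theorem for structures out of a partition theorem for ``coordinates'' --- the Graham--Rothschild parameter-set theorem, which subsumes Hales--Jewett --- by a controlled iterated strong amalgamation. First I would dispose of the easy reductions: it suffices to treat a single symmetric, antireflexive $r$-ary relation, since substructure and strong amalgamation act on the relation symbols of $L^{-}$ independently; and the base case $L^{-}=\emptyset$, where $\ovr{\Vv}$ is just the class of finite linear orders, is the classical finite Ramsey theorem, because an $\mathbf{A}$-substructure of an ordered $\mathbf{C}$ is simply an $\|A\|$-element subset with its induced order.

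Second, I would prove the \emph{Partite Lemma}. With $a:=\|A\|$, call an ordered $L^{-}$-hypergraph \emph{$a$-partite} if its vertex set splits into linearly consecutive blocks $X_1<\cdots<X_a$ with every hyperedge meeting each block at most once, and let the \emph{transversal type} $\mathbf{A}$ be the one-vertex-per-block subobject. The lemma asserts that for every $a$-partite $\mathbf{B}$ and every $k$ there is an $a$-partite $\mathbf{C}$ (again ordered, blocks kept as intervals) with $\mathbf{C}\rightarrow(\mathbf{B})^{\mathbf{A}}_{k}$: one builds $\mathbf{C}$ so that its transversals are the points of a large parameter set over copies of the blocks of $\mathbf{B}$, copies of $\mathbf{B}$ inside $\mathbf{C}$ correspond to parameter subspaces of the appropriate dimension, and a $k$-coloring of transversals is thereby a $k$-coloring of the parameter set; Graham--Rothschild then returns a monochromatic subspace, i.e.\ a homogeneous copy of $\mathbf{B}$.

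Third comes the partite construction itself. Given arbitrary $\mathbf{A},\mathbf{B}\in\ovr{\Vv}$, fix by the finite Ramsey theorem a finite ordered set $\mathbf{N}$ such that every $k$-coloring of its $a$-element subsets admits a monochromatic $\|B\|$-element subset, and enumerate the $a$-subsets of $\mathbf{N}$ as positions $S_1,\dots,S_t$. I would build a tower of finite ordered hypergraphs --- the \emph{pictures} --- $\mathbf{P}_0\subseteq\mathbf{P}_1\subseteq\cdots\subseteq\mathbf{P}_t$, each carrying a partition of its vertex set into parts indexed by $\mathbf{N}$: $\mathbf{P}_0$ contains one disjoint copy of $\mathbf{B}$ for each $\|B\|$-element subset $W\subseteq\mathbf{N}$, with the $j$-th vertex placed in the part indexed by the $j$-th element of $W$; and $\mathbf{P}_{s+1}$ is obtained from $\mathbf{P}_s$ by isolating the $a$-partite subsystem carried by the $a$ parts indexed by $S_{s+1}$ (with all $\mathbf{P}_s$-edges among those vertices), applying the Partite Lemma to it, and, for each copy of that subsystem inside the resulting $a$-partite Ramsey object, gluing in a copy of $\mathbf{P}_s$ extending it --- a strong amalgam which stays in $\ovr{\Vv}$ because the blocks remain order-intervals and the relations remain symmetric and antireflexive, so a compatible order survives. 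Then $\mathbf{C}:=\mathbf{P}_t$ with its partition forgotten (order retained) works: given a $k$-coloring of $\binom{\mathbf{C}}{\mathbf{A}}$, running down the tower and using at each level the homogeneity secured by the Partite Lemma application made there, one extracts a copy of $\mathbf{P}_0$ on which the color of a copy of $\mathbf{A}$ depends only on the position $S_i$ it occupies; this descends to a $k$-coloring of the $a$-subsets of $\mathbf{N}$, and the choice of $\mathbf{N}$ supplies an $\|B\|$-subset of $\mathbf{N}$, hence one of the copies of $\mathbf{B}$ in $\mathbf{P}_0$, all of whose copies of $\mathbf{A}$ receive a single color.

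The main obstacle is the bookkeeping inside the partite construction: verifying, at each of the $t$ stages, that the subsystem being colored is genuinely $a$-partite (so the Partite Lemma applies), that the iterated strong amalgamations never escape the class of finite ordered $L^{-}$-hypergraphs, and --- the delicate point --- that ``homogeneous over each individual position'' can be collapsed to ``monochromatic on one copy of $\mathbf{B}$,'' which is exactly where the set-Ramsey property of $\mathbf{N}$ is spent. Rigidity of ordered structures is used throughout: it makes ``$\mathbf{A}$-substructure of $\mathbf{C}$'' literally a subset, lets every auxiliary partition block be an order-interval, and ensures that each copy of $\mathbf{A}$ in $\mathbf{C}$ has a unique enumeration, so that coloring copies of $\mathbf{A}$ and coloring embeddings coincide (cf.\ the remark following Definition \ref{47}).
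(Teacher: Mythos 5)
The paper does not prove Theorem~\ref{32}: it is imported as a citation to \cite{rone77} (and, elsewhere in Section~\ref{204}, to the independent proof of Abramson--Harrington), so there is no ``paper proof'' to compare against. What you have written is a recognizable and essentially correct outline of the standard \nr{} argument: reduce to the ordered case, prove a partite lemma by encoding transversals as points of a parameter set and invoking Graham--Rothschild (or Hales--Jewett in the relevant form), then run the partite (amalgamation) construction indexed by the $\binom{\|B\|}{\|A\|}$-many positions in a set~$\mathbf{N}$ chosen by the finite Ramsey theorem, collapsing level by level. You also correctly flag the two places where care is needed --- that each amalgamation step stays inside the class of ordered hypergraph structures, and that position-by-position homogeneity is converted into a single monochromatic copy of $\mathbf{B}$ by spending the set-Ramsey property of $\mathbf{N}$ --- and the role of rigidity in identifying $\mathbf{A}$-substructures with ordered subsets.

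Two small imprecisions worth tightening if you were to write this out in full. First, the opening reduction to ``a single symmetric antireflexive $r$-ary relation'' is not how the construction is usually organized; the partite lemma and the partite construction handle all the relation symbols of $L^{-}$ simultaneously (the hyperedges of $\mathbf{C}$ are defined coordinatewise for each relation), and nothing is gained, and some bookkeeping is lost, by pretending there is only one. Second, ``the transversal type $\mathbf{A}$ be the one-vertex-per-block subobject'' is ambiguous, since an $a$-partite system generally carries many non-isomorphic transversals; the partite lemma fixes a particular ordered $a$-vertex structure $\mathbf{A}$ and colors only those transversals isomorphic to it (with the $i$th vertex in the $i$th block), which is exactly what makes the parameter-set encoding work. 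Neither point is a gap in the plan, just looseness of phrasing in a sketch.
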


\begin{cor}\label{33} For $\Kk := \K_g$ the class of \emph{all} finite ordered graphs (symmetric, with no loops), $\Kk$ is a Ramsey class.
\end{cor}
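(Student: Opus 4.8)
The plan is to obtain this as an immediate specialization of Theorem \ref{32}. Take $L := L_g = \{<, R\}$, so that $L^{-} := L \setminus \{<\} = \{R\}$ is the pure (unordered) graph language. First I would observe that an $\{R\}$-hypergraph structure in the sense of the preceding definition is precisely a finite graph that is symmetric and has no loops: with only the single binary relation $R$, antireflexivity of $R$ says $\neg R(a,a)$ for all $a$ in the structure (no loops), and symmetry of $R$ says $R(a,b)$ implies $R(b,a)$. Hence $\Vv$, the class of all finite $\{R\}$-hypergraph structures, is exactly the class of all finite loopless symmetric graphs.

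Next I would check that $\ovr{\Vv} = \K_g$. By definition $\ovr{\Vv}$ consists of all linearly ordered expansions of members of $\Vv$ to $L_g$-structures; each such expansion is a finite structure interpreting $<$ as a linear order and $R$ as a symmetric, loopless edge relation, i.e. a finite ordered graph, so $\ovr{\Vv} \subseteq \K_g$. Conversely, any finite ordered graph $A \in \K_g$ is the expansion of its own reduct $A \uphp \{R\} \in \Vv$ by the linear order $<^A$, so $A \in \ovr{\Vv}$; thus $\K_g = \ovr{\Vv}$ (both sides being closed under isomorphism by our age convention).

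Finally, Theorem \ref{32} applied with this choice of $L$ gives that $\ovr{\Vv}$ is a Ramsey class, and since $\K_g = \ovr{\Vv}$ this is exactly the assertion, so $\Kk = \K_g$ is a Ramsey class. The only point requiring any care is the bookkeeping identification $\ovr{\Vv} = \K_g$ — in particular matching the notion of ``$\{R\}$-hypergraph structure'' with ``graph with no loops'' — and this is routine: there is no genuine combinatorial obstacle here, as all the Ramsey-theoretic content is already carried by Theorem \ref{32}.
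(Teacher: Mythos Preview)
Your proposal is correct and matches the paper's approach exactly: the corollary is stated immediately after Theorem~\ref{32} with no separate proof, so it is intended as the direct specialization you describe. The only content is the routine identification of $\{R\}$-hypergraph structures with loopless symmetric graphs and of $\ovr{\Vv}$ with $\K_g$, which you have spelled out correctly.
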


The next theorem addresses the case of proper subclasses $\Kk$ of the class of all finite hypergraph structures in a relational language, $L$:

\begin{thm}[{\citep[{Prop.~5.2 and Theorem~5.3}]{ne05}}]\label{42} Let $\Kk$ be a class of finite $L$-structures, for finite relational $L$.  If $\Kk$ is closed under monomorphisms, has JEP and strong amalgamation, then $\ovr{\Kk}$ is a Ramsey Class.
\end{thm}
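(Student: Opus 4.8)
This is the \nr\ theorem in the form valid for strong amalgamation classes, and I would prove it by the \emph{partite construction}, bootstrapping from the ordered-hypergraph case already recorded as Theorem~\ref{32}. Fix $A, B \in \ovr{\Kk}$ and a positive integer $k$, and set $p := \|A\|$; we must produce $C \in \ovr{\Kk}$ with $C \to (B)^A_k$. As a preliminary normalization, since $L$ is finite and every structure in $\ovr{\Kk}$ carries the definable linear order $<$, I would replace each $n$-ary relation symbol $R$ by the finitely many symmetric antireflexive $n$-ary symbols $R_\sigma$ ($\sigma \in \textrm{Sym}([n])$) recording, for each tuple of distinct elements, which $<$-ordering of its support places the tuple in $R$. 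This recoding is quantifier-free $<$-definable and invertible, it is canonical on the unique expansion of any member, and it commutes with passing to substructures; hence it carries strong amalgamation and closure under monomorphisms over to the recoded class $\Kk^\flat$ of ordered hypergraph structures, and a Ramsey object for $(B^\flat, A^\flat, k)$ in $\Kk^\flat$ reduces to one for $(B,A,k)$ in $\ovr{\Kk}$. So I may assume $\Kk$ consists of hypergraph structures in an ordered relational language.

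Next I would extract from Theorem~\ref{32} the \emph{partite lemma}: the class of finite ordered $p$-partite hypergraphs --- vertex set split into $p$ linearly arranged parts, each relation meeting every part at most once, and the copies of $A$ taken to be the transversal ones in the canonical order --- has the Ramsey property; this follows from the unrestricted ordered-hypergraph Ramsey property by the standard device of encoding the part index of a vertex into its position in the order. With the partite lemma in hand I would carry out the partite construction proper: fix a partite picture $B^\ast$ of $B$ spread over the $p$ parts, take the partite Ramsey witness $C^\ast$ supplied by the lemma, and build $C$ by processing the transversal copies of $A$ inside $C^\ast$ one at a time, at each stage amalgamating the structure built so far with a fresh copy of $B$ along the copy of $A$ currently being processed, and choosing at each step an arbitrary linear extension of the order accumulated so far. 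Here the three hypotheses on $\Kk$ are exactly what keeps the construction legal: closure under monomorphisms keeps every intermediate structure inside $\Kk$, because its finite substructures embed into the pieces (copies of $A$ and $B$); strong amalgamation guarantees that each amalgamation step creates no new tuple in any relation, hence no new forbidden configuration, so the construction never leaves $\ovr{\Kk}$; and JEP is used to initialize the process and to keep a single connected object.

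Finally I would check $C \to (B)^A_k$. Given a $k$-coloring of $\binom{C}{A}$, pull it back along the final amalgamation map to a coloring of the transversal copies of $A$ in the top level of $C^\ast$, apply the partite lemma to obtain a partition-respecting monochromatic copy of $B^\ast$, and note that the copy of $B$ inserted into $C$ at the stage indexed by that copy then has all of its $A$-subobjects of one color; the remaining bookkeeping is to confirm that the $A$-subobjects of a copy of $B$ in $C$ are exactly the transversal $A$-subobjects of the matching $B^\ast$, and that colors survive the identifications made by the amalgamations, both of which are arranged by the construction. I expect the main obstacle to be the partite construction itself: organizing the iterated strong amalgamation so that it provably stays inside $\ovr{\Kk}$ at every stage while remaining rigid enough that homogeneity at the partite level transfers back to a monochromatic copy of $B$ in $C$. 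Everything else reduces to Theorem~\ref{32}, to the elementary recoding, or to routine verification.
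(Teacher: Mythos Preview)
The paper does not prove this theorem at all: it is quoted from \cite{ne05} (Prop.~5.2 and Theorem~5.3) as a known result, with no argument given. So there is no proof in the paper for your proposal to be compared against.

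For what it is worth, your outline is in the spirit of the Ne\v{s}et\v{r}il--R\"odl partite construction, which is indeed how results of this type are established in \cite{ne05} and \cite{rone77}. A few cautions if you intend to fill in the details. First, the ``recoding'' step is more delicate than you indicate: the hypotheses (closure under monomorphisms, JEP, strong amalgamation) are assumed for the \emph{unordered} class $\Kk$, not for $\ovr{\Kk}$, and you need to verify that whatever class you pass to retains exactly the closure property required to keep the iterated amalgamation inside the class. Second, your description of the partite construction conflates two distinct inductions: one (the partite lemma) produces a Ramsey object for transversal copies in a single picture, while the other (the partite construction) iterates over the parts of a large free-amalgamation picture, applying the partite lemma at each part; the bookkeeping of how monochromaticity propagates down through the amalgamation tree is where the real work lies, and your sketch does not yet separate these two layers. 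If you want a self-contained write-up, follow the presentation in \cite{ne05} rather than trying to derive the partite lemma directly from Theorem~\ref{32}.
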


There are also necessary conditions on being a Ramsey class:

\begin{thm}[{\citep[{Theorem~4.2}]{ne05}}]\label{58} Let $\Kk$ be a class of finite $L$-structures, for finite relational $L$.  If $\Kk$ is closed under substructures and isomorphism and has JEP and $\ovr{\Kk}$ is a Ramsey class, then $\Kk$ has the amalgamation property (in fact, $\Kk$ has strong amalgamation.)
\end{thm}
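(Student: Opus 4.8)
The plan is to derive strong amalgamation for $\Kk$ from the ordinary amalgamation property for $\ovr{\Kk}$, and to obtain the latter from the Ramsey property of $\ovr{\Kk}$ by a coloring argument. I would use throughout that every member of $\ovr{\Kk}$ carries a linear order, hence is rigid, so that for $A,B\in\ovr{\Kk}$ the set $\binom{B}{A}$ of $A$-substructures of $B$ is just the set of embeddings $A\hookrightarrow B$. \emph{Reduction to $\ovr{\Kk}$.} It suffices to prove $\ovr{\Kk}$ has amalgamation. Given a $\Kk$-amalgamation base $A\overset{f_1}{\hookrightarrow}B_1$, $A\overset{f_2}{\hookrightarrow}B_2$ — identify $A$ with its images so that $B_1\cap B_2=A$ — fix any linear order $<_A$ on $A$, extend it to a linear order on $B_1$ placing all of $B_1\setminus A$ \emph{above} all of $A$, and to a linear order on $B_2$ placing all of $B_2\setminus A$ \emph{below} all of $A$. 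These ordered expansions lie in $\ovr{\Kk}$ by definition of $\ovr{\,}$, and an amalgam $\bar C\in\ovr{\Kk}$ of them, with order-embeddings $\bar g_1,\bar g_2$ agreeing on $\bar A=(A,<_A)$, is automatically \emph{strong}: $\bar g_1(B_1\setminus A)$ lies entirely above $\bar g_1(A)=\bar g_2(A)$ while $\bar g_2(B_2\setminus A)$ lies entirely below it, so $\bar g_1(B_1)\cap\bar g_2(B_2)=\bar g_1(A)$, and the reduct of $\bar C$ to the order-free language is a strong amalgam in $\Kk$. Using the JEP of $\Kk$ I would also first pass to the case $B_1=B_2$, by jointly embedding $B_1,B_2$ into a single $D\in\Kk$ and amalgamating the two resulting copies of $A$; the orderings above are then applied to $D$.

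\emph{Amalgamation for $\ovr{\Kk}$ via Ramsey.} Suppose, toward a contradiction, that $\bar A\hookrightarrow\bar B_1$, $\bar A\hookrightarrow\bar B_2$ is an amalgamation base in $\ovr{\Kk}$ with no amalgam there. Fix $\bar E\in\ovr{\Kk}$ with embeddings $\bar\beta_i:\bar B_i\hookrightarrow\bar E$ — in the reduced case $\bar B_1=\bar B_2=\bar B$ one simply takes $\bar E:=\bar B$ with $\bar\beta_1,\bar\beta_2$ the two given embeddings of $\bar A$, so that no joint embedding is needed. By the Ramsey property choose $\bar C\in\ovr{\Kk}$ with $\bar C\rightarrow(\bar E)^{\bar A}_{2}$, and color $\binom{\bar C}{\bar A}$ by giving an embedding $h:\bar A\hookrightarrow\bar C$ color $1$ if it extends to an embedding $h^{+}:\bar B_1\hookrightarrow\bar C$ with $h^{+}$ restricted to $\bar A\subseteq\bar B_1$ equal to $h$, and color $0$ otherwise. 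Ramsey then produces $\bar E'\subseteq\bar C$ with $\bar E'\cong\bar E$ and $\binom{\bar E'}{\bar A}$ monochromatic; let $\psi:\bar E\to\bar E'$ be the isomorphism. The copy of $\bar A$ inside $\psi(\bar\beta_1(\bar B_1))$ has color $1$, witnessed by $\psi\circ\bar\beta_1$, so the constant color on $\binom{\bar E'}{\bar A}$ is $1$; in particular the copy of $\bar A$ inside $\psi(\bar\beta_2(\bar B_2))$ has color $1$, hence extends to some $h^{+}:\bar B_1\hookrightarrow\bar C$. But then $h^{+}$ and $\psi\circ\bar\beta_2$ are embeddings of $\bar B_1$ and $\bar B_2$ into $\bar C$ agreeing on $\bar A$, so $\bar C$ is an amalgam of the base — contradiction. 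Thus $\ovr{\Kk}$ has amalgamation, and by the reduction $\Kk$ has strong amalgamation.

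The heart of the proof, and the step I expect to be the main obstacle, is the coloring: the correct invariant to attach to a copy of $\bar A$ is its \emph{one-sided} extendibility (to $\bar B_1$ only), and the crucial observation is that a single monochromatic copy of $\bar E$ simultaneously exhibits a $\bar B_1$-extendible copy of $\bar A$ (from the image of $\bar B_1$) and a $\bar B_2$-extendible one (from the image of $\bar B_2$), which monochromaticity then forces also to be $\bar B_1$-extendible, producing the forbidden amalgam. A secondary delicate point, and the one place the linear order genuinely does work, is the joint-embedding bookkeeping in the Reduction — fixing a common ordered base $\bar A$ for the two sides while retaining room to put $\bar B_1\setminus A$ and $\bar B_2\setminus A$ on opposite sides of it, and (in the non-reduced case) finding a single $\bar E\in\ovr{\Kk}$ carrying compatibly-ordered copies of both $\bar B_i$; in the classes of interest here, e.g.\ $\Kk=\K_g$, this is immediate, since $\Kk$ already has strong amalgamation and disjoint unions may be freely reordered.
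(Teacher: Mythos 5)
The paper does not prove this statement; it cites it as Theorem 4.2 of Ne\v{s}et\v{r}il \citep{ne05} and uses it as a black box, so there is no in-paper proof to compare against. Your proposal is the standard Ramsey-implies-amalgamation argument, and the core coloring step is correct: coloring a copy of $\bar A$ in $\bar C$ by its one-sided extendibility to $\bar B_1$, and then observing that a monochromatic copy of $\bar E$ must be colored ``extends'' (because of the copy of $\bar B_1$ it contains), whence the copy of $\bar A$ sitting inside the copy of $\bar B_2$ also extends and produces an amalgam, is exactly right. The reduction --- placing $B_1\setminus A$ above $A$ and $B_2\setminus A$ below so that any order-amalgam is automatically strong when the order is forgotten --- is also correct (with the harmless caveat that it needs $A\neq\emptyset$).

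However, there is a genuine gap at the line ``Fix $\bar E\in\ovr{\Kk}$ with embeddings $\bar\beta_i:\bar B_i\hookrightarrow\bar E$.'' This is JEP for $\ovr{\Kk}$, which is not among the hypotheses (the hypothesis is JEP for $\Kk$) and which you do not derive. Your proposed escape --- that in the reduced case $\bar B_1=\bar B_2=\bar B$, so $\bar E:=\bar B$ suffices --- does not actually arise: your reduction via JEP of $\Kk$ gets you to $B_1=B_2=D$ as \emph{unordered} structures, with two embeddings $f_1',f_2':A\to D$, but the ordered structures you then build are $(D,<_1)$ with $D\setminus f_1'(A)$ above the base and $(D,<_2)$ with $D\setminus f_2'(A)$ below it; these carry different linear orders, so $\bar B_1\neq\bar B_2$ as objects of $\ovr{\Kk}$, and $\bar E:=\bar B$ is not available. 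Nor does JEP of $\Kk$ automatically give JEP of $\ovr{\Kk}$: if $E\in\Kk$ embeds $B_1,B_2$ via $\beta_1,\beta_2$ with overlapping images, the pushed-forward partial orders $\beta_1(<_1)$ and $\beta_2(<_2)$ may conflict on $\beta_1(B_1)\cap\beta_2(B_2)$, so no linear order on $E$ need make both $\beta_i$ order-embeddings. You flag this as a ``secondary delicate point'' and dispatch it only for the concrete case $\Kk=\K_g$, but the theorem as stated is general, and this is precisely the place where the argument is incomplete; to close it you would need either to establish JEP of $\ovr{\Kk}$ from the given hypotheses, or to restructure the Ramsey application so that only a single $\bar B_i$ is required to sit inside the monochromatic structure.
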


We finish this section with known Ramsey classes obtained as the automorphism group of a structure:

\begin{dfn} Say $\Uu$ is a \emph{Fra\"{\i}ss\'{e} order class} if it is an age of \emph{finite} $L$-structures linearly ordered by some relation $\{<\}$ of $L$, where the structures in $\Uu$ are of arbitrarily large finite cardinality.

Say a \emph{countable ordered Ramsey class} is a Fra\"{\i}ss\'{e} order class which is a Ramsey class containing countably many isomorphism types of structures.
\end{dfn}

\begin{thm}[{\citep[{Theorem 4.7}]{kpt05}}] The automorphism groups $G$ of Fra\"{\i}ss\'{e} limits of countable ordered Ramsey classes are exactly the extremely amenable closed subgroups of Sym$(\omega)$.
\end{thm}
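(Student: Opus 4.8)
This is the Kechris--Pestov--Todorcevic correspondence \cite{kpt05}; here is the approach I would take. Let $\K$ be a countable ordered Ramsey class, $N := \mathrm{Flim}(\K)$ its Fra\"{\i}ss\'{e} limit -- an ordered, ultrahomogeneous structure on domain $\omega$ -- and $G := \mathrm{Aut}(N) \leq \mathrm{Sym}(\omega)$, which is a closed subgroup. First I would set up the dictionary between the finite combinatorics of $N$ and the dynamics of $G$. Since every member of $\K$ is linearly ordered, hence rigid, the set $\binom{N}{A}$ of $A$-substructures of a finite $A \subseteq N$ is just the set of embeddings $A \hookrightarrow N$, on which $G$ acts transitively by ultrahomogeneity; fixing one copy $A_0$, its point stabilizer is the pointwise stabilizer $G_{A_0}$, so $\binom{N}{A} \cong G/G_{A_0}$ as $G$-sets, and the $G_{A_0}$ form a neighbourhood basis at the identity of $G$ consisting of open subgroups.

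The second step is the key topological-dynamics fact from \cite{kpt05}: $G = \mathrm{Aut}(N)$ is extremely amenable iff $N$ has the following property $(\star)$: for all finite $A \leq B$ in $\K$ and all positive integers $k$, every colouring $c \colon \binom{N}{A} \to k$ admits a copy $B' \subseteq N$ of $B$ with $\binom{B'}{A}$ $c$-monochromatic. The ``only if'' direction is short: $k^{\binom{N}{A}}$ is a compact $G$-flow, so by extreme amenability the orbit closure $\overline{G \cdot c}$ has a fixed point $c^\ast$, which is $G$-invariant, hence constant since $G$ is transitive on $\binom{N}{A}$; fixing any copy $B_0 \subseteq N$ of $B$ (possible as $B \in \K = \mathrm{Age}(N)$), the relation $c^\ast \in \overline{G \cdot c}$ supplies $g \in G$ with $g \cdot c$ constant on $\binom{B_0}{A}$, i.e. $c$ constant on $\binom{g^{-1}B_0}{A}$. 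The ``if'' direction is the substantial one: one shows the greatest $G$-ambit has a fixed point by a compactness argument over the finite substructures of $N$, using that, via the dictionary, for any $G$-flow $X$, base point $x_0$, and continuous $\phi \colon X \to \mathbb{R}$, the function $g \mapsto \phi(g x_0)$ on $G$ depends, up to an arbitrarily small error, only on the restriction of $g$ to a finite substructure of $N$ -- so it is essentially a finite colouring of some $\binom{N}{A}$, and $(\star)$ is exactly what stabilizes such data on translated finite configurations.

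The third step is that $(\star)$ is equivalent to $\K$ being a Ramsey class. If $\K$ is Ramsey, then $(\star)$ holds: given finite $A \leq B$ in $\K$ and $k$, take $C \in \K$ with $C \rightarrow (B)^A_k$, embed $C$ into $N$, transport $c$ along this embedding to a colouring of $\binom{C}{A}$, and apply $C \rightarrow (B)^A_k$. Conversely, if some finite $A \leq B$ in $\K$ and some $k$ witness that $\K$ is not Ramsey, write $N = \bigcup_n C_n$ as an increasing union of finite substructures; each $C_n$ is isomorphic to a member of $\K$, hence carries a colouring of $\binom{C_n}{A}$ with no monochromatic copy of $B$ inside $C_n$, and a cluster point of (arbitrary extensions of) these colourings in the compact space $k^{\binom{N}{A}}$ is a colouring of $\binom{N}{A}$ with no monochromatic copy of $B$ at all, contradicting $(\star)$. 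Combining the last two steps, $\mathrm{Aut}(\mathrm{Flim}(\K))$ is extremely amenable iff $\K$ is Ramsey; since automorphism groups of structures on $\omega$ are closed in $\mathrm{Sym}(\omega)$, this gives the inclusion ``$\subseteq$'' of the theorem. For ``$\supseteq$'', let $G \leq \mathrm{Sym}(\omega)$ be closed and extremely amenable. Then $G = \mathrm{Aut}(\mathbf{M})$ for the canonical relational ultrahomogeneous structure $\mathbf{M}$ on $\omega$ that has a relation symbol for each orbit of $G$ on each $\omega^n$; since $\mathrm{Sym}(\omega)$ has only countably many orbits on each $\omega^n$, $\mathrm{Age}(\mathbf{M})$ has countably many isomorphism types. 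Extreme amenability forces a fixed point in the compact $G$-flow of linear orders on $\omega$, i.e. $G$ preserves a linear order $<$; adjoining $<$ to the language yields an ordered ultrahomogeneous $\mathbf{M}^+$ with $\mathrm{Aut}(\mathbf{M}^+) = G$ still, and $\K := \mathrm{Age}(\mathbf{M}^+)$ is a Fra\"{\i}ss\'{e} order class with countably many isomorphism types, members of arbitrarily large finite size (as $\mathbf{M}^+$ is infinite), and $\mathrm{Flim}(\K) = \mathbf{M}^+$. By the last two steps $\K$ is Ramsey, so $G = \mathrm{Aut}(\mathrm{Flim}(\K))$ for a countable ordered Ramsey class, as required.

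The step I expect to be the main obstacle is the ``if'' direction of the second step: showing that property $(\star)$ of $N$ forces $G = \mathrm{Aut}(N)$ to be extremely amenable. This rests on the non-trivial point that the bounded (uniformly) continuous functions on $\mathrm{Aut}(N)$ are controlled by finitely many coordinates -- finite substructures of $N$ -- so that a fixed point of the greatest ambit can be extracted from the finite Ramsey data by a compactness argument; the remaining ingredients (the rigidity dictionary, the ``only if'' direction, both implications of $(\star) \Leftrightarrow \K$ Ramsey, and the fixed-point-in-$\mathrm{LO}(\omega)$ trick for the converse) are comparatively routine.
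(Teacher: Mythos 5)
The paper does not prove this theorem; it is stated verbatim as a citation of Theorem 4.7 of Kechris--Pestov--Todor\v{c}evi\'{c} \cite{kpt05} and is used as a black box in the discussion of known Ramsey-class facts. There is therefore no ``paper's own proof'' to compare against. Your sketch is a faithful reconstruction of the original KPT argument: the identification of $\binom{N}{A}$ with $G/G_{A_0}$ via rigidity and ultrahomogeneity, the equivalence of extreme amenability of $G$ with the Ramsey property $(\star)$ of $N$ (with the orbit-closure/fixed-point argument for the easy direction and the greatest-ambit compactness argument for the hard one), the equivalence of $(\star)$ with $\K$ being a Ramsey class by transporting colourings to and from finite substructures, and, for the converse inclusion, realizing a closed extremely amenable $G \leq \mathrm{Sym}(\omega)$ as $\mathrm{Aut}(\mathbf{M}^+)$ with a $G$-invariant order extracted from the flow of linear orders on $\omega$. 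You also correctly flag the genuinely hard step (that property $(\star)$ forces a fixed point in the greatest $G$-ambit); that is indeed where the real work lies in \cite{kpt05}. One small imprecision: the countability of $\mathrm{Age}(\mathbf{M})$ up to isomorphism follows simply because each $\omega^n$ is countable, so any group action on it has countably many orbits -- not because of anything special about $\mathrm{Sym}(\omega)$.
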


		\section{Ramsey classes and the modeling property}\label{207}

In the next two lemmas, we will be developing a characterization of the relationship between Ramsey classes and generalized indiscernibles with the modeling property.  

Fix $L'$ throughout this subsection, a finite, relational language containing a relation $<$ for order. 

There are many ways to set up notation to represent generalized indiscernibles in an $L$-structure $M$ indexed by an $L'$-structure $I$.  Rather than using a many-sorted language, we introduce new predicates for the ``index-sort'' and ``target-sort'', $Q$, $P$, respectively.  

\begin{dfn} Given a finite relational language $L'$ and a language $L$:
\begin{enumerate}
\item let $\La(L',L)$ be the language with signature $L' \cup L \cup \{P, Q, f\}$ where $P, Q$ are new unary predicate symbols and $f$ is a new unary function symbol.

\item We will use as our base theory $T_0$ which will be defined as

\vspace{-.05in}

\begin{enumerate}
\item everything is in $P$ or $Q$ but nothing is in both $P$ and $Q$
\item $f$ restricts to an injective function from $Q$ into $P$
\item off of $Q$, $f$ is the identity.
\end{enumerate}
\hspace{-.2in} For the sake of completeness, we show this theory is universal:

\begin{enumerate}
\item $\forall x [(P(x) \vee Q(x)) \wedge \neg(P(x) \wedge Q(x))]$
\item $\forall x (Q(x) \rightarrow P(f(x))) \wedge \forall x \forall y ((x \neq y \wedge Q(x) \wedge Q(y)) \rightarrow f(x) \neq f(y))$
\item $\forall x (\neg Q(x) \rightarrow f(x)=x)$ 
\end{enumerate}
\end{enumerate}
\end{dfn}

\begin{rmk}
In the case that we wish to model a $Q(I)$-indiscernible $(\ov{a}_j : j \in I)$ whose tuples have length $k$, it is easy to require that $P \subseteq M^k$ for some power of the universe, $k$.

As usual, we will assume the tuples are of length 1 for notational simplicity.
\end{rmk}

\begin{ntn}  For greater ease in reading, we will often use $s_j$ or $t_j$ to represent a variable $x_i$, in the case that $Q(x_i)$, or $y_j$ in the case that $P(x_i)$.
\end{ntn}

Given an $L'$-structure $I$, an $L$-structure $M$ and a sequence of parameters in $M$ indexed by $I$, $(a_i : i \in I)$, there is a canonical way to obtain an $\La(L',L)$-structure $\C$ such that 
\begin{enumerate}
\item $|Q^{\C}| = |I|$ and $Q^{\C} \uphp L' \cong_{id} I$ 
\item $|P^{\C}| = |M|$ and $P^{\C} \uphp L \cong_{id} M$  
\item $a_i = f^{\C}(i)$
\end{enumerate}

\begin{dfn}
By $\C(I, M, \la a_i : i \in I \ra)$ we mean the $\La(L',L)$-structure $\C$ such that
\begin{enumerate}
\item $\C \vDash T_0$
\item $|Q^{\C}| = |I|$ and $Q^{\C} \uphp L' \cong^{L'}_{id} I$
\item $|P^{\C}| = |M|$ and $P^{\C} \uphp L \cong^L_{id} I$
\item $a_i = f^{\C}(i)$
\item $f(i)=i$, for $i$ in $P^{\C}$
\end{enumerate}

\noindent By $I(\C)$ we will refer to the $L'$-reduct of $\C$ to universe $Q^{\C}$; by $M(\C)$ we will refer to the $L$-reduct of $\C$ to universe $P^{\C}$.
\end{dfn}

It will be useful to have the following notion.
\begin{dfn} Fix an $\La(L',L)$-structure $\M \vDash T_0$.  For an $L'$-formula $\theta$, we define $\theta^Q$, \textit{the relativization of $\theta$ to $Q$} recursively, as follows:
\begin{itemize}
\item For relations $\theta(\ov{x}):=R(x_1, \ldots, x_m)$, $\theta^Q(x_1, \ldots, x_m) = R(x_1, \ldots, x_m)$.
\item For formulas $\varphi(\ov{x})$ and $\psi(\ov{x})$ 
	\begin{enumerate}
	\item for $\theta(\ov{x}) = \varphi(\ov{x}) \wedge \psi(\ov{x})$, $\theta^Q(\ov{x}) = \varphi^Q(\ov{x}) \wedge \psi^Q(\ov{x})$
	\item for $\theta(\ov{x}) = \neg \varphi(\ov{x})$, $\theta^Q(\ov{x}) = \neg (\varphi^Q(\ov{x}))$
	\end{enumerate}
\item For formulas $\psi(x)$ with free variable $x$ and $\theta = \exists x \psi(x)$, 

$\theta^Q = \exists x (Q(x) \wedge \psi(x))$
\end{itemize}

For a theory $T$, by $(T)^Q$ we mean $\{ \theta^Q : \theta \in T \}$.
\end{dfn}

These are some definitions that will help us capture the notion that ``the $b_j$ are $L'$-based on the $a_i$'' in a set of sentences:

\begin{dfn} By $F_m(I)$ we denote the set of complete quantifier-free $(L',m)$-types realized in $I$.
\end{dfn}

\begin{dfn} Given an $\La(L',L)$-structure $\N \vDash T_0$, $m < \omega$, 

\noindent $q \in F_m(I(\N))$ and a finite $\Delta(y_1, \ldots, y_m) \subseteq L$, define the 

\vspace{.1in} 

$(\Delta,q)$-\textit{profile in} $\N$ 

\vspace{.1in}

\noindent to be:

\vspace{.1in}

\hspace{-.2in} $P_{(\Delta, q)}(\N) = \{ p \in S_m^{\Delta}(\emptyset; M(\N)) : \N \vDash (\exists x_1, \ldots, x_m)$ 

\vspace{.05in}

\hspace{1.5in} $((\bigwedge q)(x_1, \ldots, x_m) \wedge (\bigwedge p)(f(x_1), \ldots, f(x_m)) \}$
\end{dfn}

In the next two subsections we prove the following characterization of when $L'$-generalized indiscernibles having the modeling property for an age of $L'$-structures, $\Uu$:

\begin{thm1} Let $L'$ be a finite relational language containing a binary relation symbol for order, $<$, and let $\Uu$ be some nonempty collection of finite $L'$-structures that are linearly ordered by $<$. Suppose that $\Uu$ has JEP and is closed under isomorphism and substructures.  

$\Uu$ is a Ramsey Class if and only if $L'$-generalized indiscernibles have the modeling property for $\Uu$.
\end{thm1}

	\subsection{Characterization: sufficiency}\label{208}

\begin{lem}\label{9} Let $L'$ be a finite relational language containing a binary relation symbol for order, $<$, and let $\Uu$ be some nonempty collection of finite $L'$-structures that are linearly ordered by $<$.  Suppose that $\Uu$ has JEP and is closed under isomorphism and substructures.  

If $\Uu$ is a Ramsey Class, then $L'$-generalized indiscernibles have the modeling property for $\Uu$
\end{lem}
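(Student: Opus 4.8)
The plan is to use a compactness argument combined with the Ramsey property of $\Uu$. Fix parameters $(a_i : i \in I)$ in $M$, indexed by an $L'$-structure $I$ with $\textrm{age}(I) = \Uu$. We want to produce an $L'$-generalized indiscernible $(b_j : j \in J)$, with $\textrm{age}(J) = \Uu$, based on the $a_i$. First I would set up the target structure $\C := \C(I, M, \la a_i : i \in I \ra)$ and work in the language $\La(L', L)$ with base theory $T_0$. The key move is to write down a type $\Gamma$ over $\C$ in new constants $\{c_j : j \in J\}$ (for some fixed $L'$-structure $J$ with $\textrm{age}(J) = \Uu$ — e.g.\ $J$ can be taken to be the Fra\"{\i}ss\'{e} limit of $\Uu$, which exists since $\Uu$ has JEP, AP by Theorem~\ref{58}, and is closed under substructure and isomorphism) that says: (i) $\textrm{Th}(\C)$ holds; (ii) the $c_j$ form a copy of $J$ inside $Q$; (iii) whenever two increasing tuples $\ov{c}_{\ov{j}}, \ov{c}_{\ov{j'}}$ realize the same quantifier-free $L'$-type, their images under $f$ realize the same complete $L$-type (generalized indiscernibility); and (iv) for each quantifier-free $L'$-type $q$ and each finite $\Delta \subseteq L$, the $(\Delta, q)$-profile realized by the $c_j$ is one that is already realized, via $f$, by some $\ov{a}_{\ov{i}}$ with $\textrm{qftp}^{L'}(\ov{i}; I) = q$ — this is the ``based on'' clause, forcing the choice of $\Delta$-type to come out of the set $X_{\Delta, \ov{j}}$ described after Observation~\ref{91}.

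The substance is the finite satisfiability of $\Gamma$. A finite subtype mentions finitely many constants $c_{j_1}, \ldots, c_{j_k}$ and finitely many $L$-formulas, hence a single finite $\Delta \subseteq L$ and a single finite set of quantifier-free $L'$-types to control. I would take $A := $ the $L'$-substructure of $J$ on $\{j_1, \ldots, j_k\}$; since $\textrm{age}(J) = \Uu = \textrm{age}(I)$, there is an $A$-substructure of $I$, so it would suffice to find an isomorphic copy $A' \subseteq I$ on which the coloring $\ov{i} \mapsto (\text{complete }\Delta\text{-type of }\ov{a}_{\ov{i}})$ is constant on each quantifier-free-$L'$-type-class — then interpreting $c_{j_s}$ by the matching element of $A'$ satisfies the finite subtype, because generalized indiscernibility on $A'$ is automatic (there is only one copy of each sub-configuration up to the relevant automorphisms, using that everything is linearly ordered so the equivalence classes in Definition~\ref{30} are singletons). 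To get such an $A'$ I invoke that $\Uu$ is a Ramsey class: given the ``pattern'' structure $A$ I need and the number $k$ of $\Delta$-types available (finite), there is $C \in \Uu$ with $C \to (A)^{A_0}_k$ simultaneously for each of the finitely many substructure-types $A_0 \subseteq A$ — iterating the Ramsey property over the finitely many isomorphism types of subobjects of $A$ yields a single $C$ that is homogeneous-witnessing for all of them at once. Since $C \in \Uu = \textrm{age}(I)$, $C$ embeds into $I$, and the coloring of subobjects of $A$ inside $I$ induced by $M$'s $\Delta$-types, pulled back to $C$, has a homogeneous copy of $A$; that copy is the desired $A'$.

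Once $\Gamma$ is shown finitely satisfiable, compactness gives a model $\N \vDash \Gamma$; setting $M_1 := M(\N)$ and $b_j := f^{\N}(c_j^{\N})$ yields the required $L'$-generalized indiscernible in $M_1 \vDash \textrm{Th}(M)$, indexed by (a structure isomorphic to) $J$ with $\textrm{age}(J) = \Uu$, and based on the $a_i$ by clause (iv) together with Observation~\ref{91}. The consistency of the resulting $p^\eta$'s — hence that the $b_j$ really are generalized indiscernible in the strong sense — follows just as in the Claim inside the proof of Lemma~\ref{4}.

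\medskip

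The main obstacle I anticipate is the bookkeeping in the finite-satisfiability step: packaging the simultaneous Ramsey requirement for \emph{all} subobject-types of the pattern $A$ (not just for a single $A_0$) into one $C \in \Uu$, and checking that the induced coloring on subobjects is well-defined given the equivalence-class formulation of $C \choose A_0$ in Definition~\ref{30}. The linear order is what makes this manageable — it collapses those equivalence classes to points, so ``coloring embeddings'' and ``coloring $\ov{i}\mapsto \Delta\text{-type of }\ov{a}_{\ov{i}}$'' line up cleanly — but making the iterated-Ramsey construction of $C$ precise, and verifying that homogeneity of a copy of $A$ for all subobject colorings simultaneously really does force generalized indiscernibility on that copy, is where the real care is needed.
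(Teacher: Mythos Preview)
Your proposal is correct and follows essentially the same strategy as the paper: write down, via compactness, the conditions for an $L'$-generalized indiscernible based on the $a_i$, and verify finite satisfiability by an iterated application of the Ramsey property to obtain a finite substructure of $I$ that is simultaneously homogeneous for all the relevant subobject-types --- exactly the ``main obstacle'' you anticipated, which the paper handles in its Claim~\ref{50} via the nested sequences $\la Z_i \ra$ and $\la Y_i \ra$.

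Two small points of comparison. First, the paper does not introduce constants $c_j$ or fix $J$ in advance: it writes a constant-free $\La(L',L)$-theory $S$ whose models $\C$ automatically yield $J := I(\C)$ with the correct age via the schemas $(B)_S$ and $(D)_S$; your version with constants indexed by a fixed $J$ is equivalent but slightly heavier. Second, your detour through the Fra\"{\i}ss\'{e} limit is unnecessary (you could simply take $J = I$), and your citation of Theorem~\ref{58} for AP is not quite on target as stated there --- that theorem concerns $\ovr{\Kk}$ for an unordered $\Kk$, not an already-ordered $\Uu$ --- though the underlying fact that Ramsey classes of rigid structures have AP is indeed the content of Ne\v{s}et\v{r}il's result.
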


\begin{proof}  Fix $L'$, $\Uu$, $\Uu$ as in the assumptions above.  Suppose that $\Uu$ is a Ramsey Class.  Fix $L'$-structure $I$ with age($I$) = $\Uu$.  In order to show that $L'$-generalized indiscernibles have the modeling property for $\Uu$, fix a set of parameters in an $L$-structure $M$ that are indexed by $I$, $(a_i : i \in I)$ and we wish to show that we can find $(L', J)$-indiscernibles $(b_j : j \in J)$ in some $M_1 \vDash \textrm{Th}(M)$ that are based on the $a_i$, for age$(J)$ = $\Uu$.

Our $L'$-generalized indiscernibles will be given by a new sequence $(b_j : j \in J) := (f^{\C}(j) : j \in I(\C))$ for some $\La(L', L)$-structure $\C \vDash T_0$ realizing the $\La(L',L)$-theory, $S$, defined below.  We will have that $(b_j : j \in J)$ is an $L'$-generalized indiscernible sequence in $L$-structure $M(\C) \vDash \textrm{Th}(M)$, $L'$-based on the $a_i$, with age$(J)$=$\Uu$.  

\noindent Here is the $\La(L', L)$-theory $S$, listed by subtheories $(A)_S - (D)_s$: 

\vspace{.05in}

$$S = (\textrm{Th}(M))^P \cup (A)_{S} \cup (B)_{S} \cup (C)_S \cup (D)_S :=$$

\vspace{.05in}

\begin{enumerate}
\item[] \hspace{-.4in} (A)$_{S}$:= $\{$ 

\vspace{.1in}

\hspace{-.5in} $(\forall s_1, \ldots, s_n) ( \ \forall t_1, \ldots, t_n) ( \left[ \bigwedge_{1 \leq i \leq n} Q(s_i) \wedge Q(t_i) \right] \rightarrow$ 

\hspace{-.5in} $\left[ \underbrace{\bigwedge_{\theta \in L'_{\textrm{at}}(x_1, \ldots, x_ n)}}_a   [\theta(s_1, \ldots, s_n) \leftrightarrow \theta(t_1, \ldots, t_n)]   \rightarrow \right.$ 

\begin{flushright}
$\left. [\varphi(f(s_1), \ldots, f(s_n)) \leftrightarrow \varphi(f(t_1), \ldots, f(t_n))] \right] \ )$
\end{flushright}

\vspace{.05in}

\begin{flushright}
$: n < \omega, \varphi(y_1, \ldots, y_n) \in L(y_1, \ldots, y_n) \}$
\end{flushright}

\vspace{.05in}

\item[] \hspace{-.4in} (B)$_{S}$:= $\{$

\vspace{.05in}

\hspace{-.5in} $(\forall t_1, \ldots, t_m) ( \ \left[ \bigwedge_{1 \leq i \leq m} Q(t_i) \right] \rightarrow \left[ \underbrace{\bigvee_{q \in F_m(I)}}_b (\bigwedge q(t_1, \ldots, t_m)) \right] \ )$ \begin{flushright} $: m < \omega \}$ \end{flushright}

\item[] \hspace{-.4in} (C)$_{S}$:= $\{$

\vspace{.05in}

\hspace{-.5in} $(\forall t_1, \ldots, t_m) ( \ \left[ \bigwedge q(t_1, \ldots, t_m) \right] \rightarrow \left[ \underbrace{\bigvee_{p \in P_{(\Delta,q)}(I)}}_c (\bigwedge p(f(t_1), \ldots, f(t_m))) \right] \ )$ 

\vspace{.05in}

\begin{flushright}
$: m < \omega, q \in F_m(I), \textrm{~finite~} \Delta(y_1, \ldots, y_m) \subseteq L(y_1, \ldots, y_m)  \}$
\end{flushright}

\item[] \hspace{-.4in} (D)$_{S}$:= $\{$

\vspace{.05in}

\hspace{-.5in} $(\exists t_1, \ldots, t_m) ( \ \left[ \bigwedge_{1 \leq i \leq m} Q(t_i) \right] \wedge  (\bigwedge q(t_1, \ldots, t_m)) \ )$  \begin{flushright} $: m < \omega, q \in F_m(I) \}$\end{flushright}
\end{enumerate}

\begin{rmk} Note that conjunctions/disjunctions $a,b,c$ are all finite: as $L'$ is finite relational, $F_m$ is finite, and since $\Delta$ is finite, $P_{(\Delta,q)}$ is finite.  Moreover, the types $q \in F_m(I)$, $p \in P_{(\Delta, q)}(I)$ are in fact finite, again since $L'$ is finite relational and $\Delta$ is finite.
\end{rmk}

	\begin{clm}\label{53} For any $\La(L',L)$-structure $\C \vDash T_0$ modeling $S$, $(f^{\C}(j) : j \in I(\C))$ is an $L'$-generalized indiscernible in $M(\C)$, $I(\C)$ is an $L'$-structure with age $= \Uu$, and $M(\C) \vDash T$.
	\end{clm} 

	\begin{proof} Let $\C$ be an $\La(L',L)$-structure modeling $S$.  Put $J := I(\C)$, and $b_j := f^{\C}(j)$ for $j \in J$.  The sequence $(b_j : j \in J)$ is clearly generalized indiscernible because of the conditions in (A)$_S$.  Moreover, $M(\C) \vDash \textrm{Th}(M)$, as this is indicated by the first set of sentences in $S$.  By (D)$_S$, every isomorphism type from age$(I)$=$\Uu$ is realized in $J$, so that age$(I)$ $\subseteq$ age$(J)$.  By $(B)_S$, every $m$-tuple from $J$ realizes some type from $F_m(I)$, i.e. age$(J)$ $\subseteq$ age$(I)$.  Thus age$(J)$=age$(I)$ = $\Uu$.  

Finally, $(b_j : j \in J)$ must be based on $(a_i : i \in I)$ since, not only does any $m$-tuple $\la j_1, \ldots, j_m \ra$ realize some type $q^*$ from $F_m(I)$, but by a condition from (C)$_S$, for any finite $\Delta \subseteq L$, $\ov{j}$ satisfies some type $p$ in the $(\Delta,q^*)$-profile of $I$, namely it satisfies the same complete $\Delta$-type as some $m$-tuple from $I$ satisfying $q^*$.
\end{proof}

	\begin{clm}$S$ is finitely satisfiable.\end{clm}
	\begin{proof} Take a finite subset $F \subseteq S$.  Let (A)$_F$ denote the intersection of (A)$_S$ and $F$, (B)$_F$ := (B)$_S$ $\cap$ $F$, and (C)$_F$ := (C)$_S \cap F$.  Let $\M := \C(I,M, \la a_i : i \in I \ra)$ for $I$, $M$ and $(a_i : i \in I)$, as fixed at the beginning of the proof.  We will show that (A)$_F$ is satisfiable in some large-enough substructure $\M_0 \subseteq \M$.  In fact, we can choose $\M_0$ so that $I(\M_0) \subseteq I$ is a finite substructure, and $M(\M_0) = M$.

Moreover, $I_0$ completely determines the structure $$\M_0 := (I_0, M(\M), \la a_i : i \in I_0 \ra)$$ if we assume that our $\M_0$ must have the properties that 
\begin{enumerate}
\item $I(\M_0)=I_0$ for some finite $I_0 \subseteq I$,
\item $M(\M_0) = M$, and
\item $f^{\M_0}(i) = a_i$.
\end{enumerate}

First note some consequences.   

\begin{clm}\label{52} For any $\La(L',L)$-structure $\M_0$ such that $I(\M_0) := I_0$ for some finite substructure $I_0 \subseteq I$, $M(\M_0) := M$ and $f^{\M_0}(i) := a_i$, we have that $\M_0 \vDash T_0 \cup (\textrm{Th}(M))^P \cup$ (B)$_F \cup$ (C)$_F$.\end{clm}

\begin{proof} Let $\M_0$ be an $\La(L',L)$-substructure such that $I(\M_0) := I_0$ for some finite $I_0 \subseteq I(\M)$ and $M(\M_0) = M$.  Then
\begin{enumerate}
\item Since $I_0 \subseteq I$ is a substructure, $M(\M_0) = M$ and $f^{\M_0}(i) := a_i$, we have that $\M_0$ is an $\La(L', L)$-substructure of $\M$.
\item $\M_0 \vDash T_0$, since $\M_0$ is a substructure of $\M$, and $T_0$ is universal.
\item Clearly age$(I_0)$ $\subseteq$ age$(I)$ as $I_0 \subseteq I$.  Thus $\M_0 \vDash $(B)$_F$
\item Fix a finite $\Delta \subseteq L$ and tuple $\ov{j}$ from $I_0$.  Let $p$ be the complete $\Delta$-type of $f(\ov{j})$ in $M(\M_0) = M$.  By the substructure conditions, $p$ is clearly in the $(\Delta,q)$-profile of $I$.  
\end{enumerate}
\end{proof}

It remains to show that we can choose a finite substructure $I_0 \subseteq I$ so that $\M_0$ also satisfies $(A)_F \cup (D)_F$.

\subsubsection{Finding $I_0 \subseteq I$:}

There is some finite sublanguage $L_0 \subseteq L$ such that all formulas in either (A)$_F$ or $(D)_F$ are from the sublanguage $\La(L',L_0) \subseteq \La(L',L)$.  Recall that $L'$ is assumed finite relational, and so $\La(L',L_0)$ is itself also in some finite signature, and in some finite list of variables $x_1, \ldots, x_r$.

We may enumerate:
\begin{enumerate}  
\item all complete quantifier-free $(L',r)$-types, $q_1, \ldots, q_m$, for some $m$
\item all complete $(L_0,r)$-types $p_1, \ldots, p_k$, for some $k$
\end{enumerate}

Thus we may assume (by adding dummy variables if necessary) that the only types $q$ that occur in (D)$_F$ are among the $q_i$; also, that the only $\varphi$ that occur in conditions of (A)$_F$ are $\varphi \in L_0$.

\begin{rmk} For simplicity, we assume that all types $q_i$ have variables listed in $\leq_{L'}$-increasing order, and that if a structure $D \in \Uu$ ``satisfies $q_i$'', that is to say that $\la d_1, \ldots, d_r \ra \vDash q_i$ where the $d_i$ are listed in $\leq_{L'}$-increasing order in $I$.
\end{rmk}

\begin{dis}\label{51}
In order to satisfy (D)$_F$, it suffices to make sure that our choice of model $I_0$ has the property that 

\begin{itemize}
\item[$(\dag)$] $I_0$ realizes all the $q_i$, for all $i \leq m$.
\end{itemize}

In order to satisfy (A)$_F$, it suffices to make sure that for any complete $(L',r)$-type $q$ realized in $I_0$ (thus, one of the $q_i$) that:

\begin{itemize}
\item[$(\ast)_q$]
\begin{enumerate}
\item for any $\la i_1, \ldots, i_r \ra$ and $\la j_1, \ldots, j_r \ra$ in $I_0$ realizing $q(\ov{x})$, that
\item $M \vDash (\varphi(\ov{a}_{\ov{i}}) \leftrightarrow \varphi(\ov{a}_{\ov{j}}) )$, for all $\varphi(x_1, \ldots, x_r)$ in $L_0$.
\end{enumerate}
\end{itemize}
\end{dis}

\subsubsection{Satisfying $(A)_F \cup (D)_F$}

First let us look at a special case.  In the following example we show how to realize $(\dag)$ and just the one $(\ast)_{q_1}$ with a choice of finite $I_0 \subseteq I$:

\

\begin{ex}\label{49}($(\dag) \ \& \ (\ast)_{q_1}$) We will show how to define our substructure $I_0=:E'$.  

Every $q_i(\ov{x})$ can be realized by a finite ordered graph, $D_i$ from age($I$) = $\Uu$, for $i \in \{1, \ldots, m\}$.  Since $\Uu$ has JEP, there is $E$ in $\Uu$ embedding all these $D_i$, thus realizing all the $q_i$.  Any $E'$ in $\Uu$ containing $E$ will work to satisfy $(\dag)$.  

In order to satisfy $(\ast)_{q_1}$, it suffices to find some $E' \in U$ containing $E$ and an $(L_0,r)$-type $p$ (this will automatically be a type $p_i$ in $P_{(L_0,q_1)}(I)$) such that 

$$\textrm{~for all~} \ov{j} \textrm{~in~} E' \textrm{~satisfying~} q_1(\ov{x}) \ \Rightarrow \ \ov{a}_{\ov{j}} \textrm{~satisfies~} p(\ov{y}).$$

\noindent Let $D_1$ in $\Uu$ realize $q_1$.  We have said that there are $k$ $(L_0, r)$-types, $\eta_j$.  Since $\Uu$ is assumed to be a Ramsey class, there is $C$ in $\Uu$, $C \rightarrow (E)^{D_1}_k$.  Since $\Uu$=age($I$), there is an embedded copy of $C$, $C' \subseteq I$.  Color all copies of $D_1$ in $C'$, $D' \subseteq C'$ according to the quantifier-free $(L_0,r)$-types, $p_j$, of the tuple $\la a_{d_1}, \ldots, a_{d_r} \ra$, for $d_1 < \ldots < d_r$ in $D'$.  Since $C$ is Ramsey for $(E, D_1, k)$, there is a copy of $E$ in $C'$, $E' \subseteq C'$, such that all copies of $D_1$ in $E'$ are colored the same color.  We have found finite $E' \subseteq C' \subseteq I$.  This $E' =: I_0$ works to satisfy $(\ast)_{q_1}$.
\end{ex}

\

For the general case, we have a sequence of $(L',r)$-types, $q_1, \ldots, q_m$, and we perform the same operations to these $q_i$ as we did to $q_1$, above, in nested sequence.  To satisfy $(\dag)$ and $((\ast)_{q_i}: i \leq m)$, we must ensure that we pick $I_0 := E' \subseteq I$ such that
\begin{enumerate}
\item $E'$ embeds a realization of each $q_i$
\item for all $i$, there is a fixed $(L_0, r)$-type $p_{g(i)}$ such that for any realization $\ov{j}$ of $q_i$ from $E'$, $\ov{a}_{\ov{j}}$ realizes $p_{g(i)}$.
\end{enumerate}

We first define a sequence of structures from $\Uu$ that will play a role with respect to $q_i$ similar to the role that $C$ had with respect to $q_1$, above.  Let the $D_i$ be realizations of the $q_i$, thus, members of $\Uu$.

		\begin{clm}\label{50} There is a $Y$ in $\Uu$ such that $Y$ embeds realizations of each $q_i$, $i \leq m$, and such that for any tuples $\ov{i}$, $\ov{j}$ of length $r$ from $Y$ with the same complete quantifier-free $L'$-type, tp$^{L_0}(\ov{a}_{\ov{i}};M)$=tp$^{L_0}(\ov{a}_{\ov{j}};M)$. (Thus $Y$ satisfies properties $(\dag)$ and $\la (\ast)_{q_i} : i \leq m \ra$.)
		\end{clm}

		\begin{proof}  Let $E$ be as in Example \ref{49}, a member of $\Uu$ that embeds all the $q_i$, $i \leq m$.  Since we assume that $\Uu$ is a Ramsey class, for each $1 \leq i \leq m$, for any $B \in U$ and $i \leq m$, there is a $C \in U$ such that:
$$C \rightarrow (B)^{D_i}_k$$
Moreover, for any $D \in \Uu$ there is a copy of $D$, $D' \subseteq I$, as age($I$)=$\Uu$.  We will utilize these facts to define a sequence from $\Uu$: $\la Z_i : 0 \leq i \leq m \ra$.
	\begin{enumerate}
\item[] \hspace{-.4in} (stage $n=0$): Let $Z_0 := E$
\item[] \hspace{-.4in} (stage $0 < n \leq m$): Let $Z_{n}$ be such that $Z_{n} \rightarrow (Z_{n-1})^{D_{n}}_k$.
	\end{enumerate}

Now obtain $Y$ by induction on $n$.  At stage $n$, we define $Y_n$ and $Z_{m-n}$ for $0 \leq n \leq m$.

	\begin{enumerate}
\item[] \hspace{-.4in} (stage $n=0$): Let $Y_0$ be a copy of $Z_m$ in $I$.
\item[] \hspace{-.4in} (stage $0 < n < m$):   By the previous stage, we have a substructure $Y_{n-1} \subseteq I$ isomorphic to $Z_{m-n+1}$.  Color copies $S$ of $D_{m-n+1}$, $S \subseteq Y_{n-1}$, according to the $L_0$-type in $M$ of $\la f^{\M}(s_i) : i \leq r \ra$ where $\la s_i : i \leq r \ra$ is an $<_{L'}$-increasing enumeration of $S$.  By definition of $Z_{m-n+1}$, there is a copy $Y_{n}$ of $Z_{m-n}$, $Y_{n} \subseteq Y_{n-1}$, such that $Y_{n}$ is homogeneous for $D_{m-n+1}$.
	\end{enumerate}
For all $i$, $1 \leq n \leq m-1$, we have that $Y_n \subseteq Y_{n+1}$, and $Y_n$ is homogeneous for copies of $D_{m-n+1}$.  Thus by an easy induction, for every $i>0$, $Y_i$ is homogeneous for copies of $D_j$ for all $j$ such that $m-n+1 \leq j \leq m$. At stage $n=m$, we obtain $Y_m \subset I$, a copy of $Z_1$ homogeneous for all copies of $D_1, \ldots, D_m$.  $Y_m$ satisfies $(\dag)$. 
		\end{proof}

Define $I_0:=Y_m$ from the conclusion of Claim \ref{50}. As $I_0$ satisfies $(\dag)$ and $\la (\ast)_{q_i} : i \leq m \ra$, by previous discussion, $\M_0 \vDash (A)_F \cup (D)_F$, for $\M_0 := (I_0, M, \la a_i : i \in I_0 \ra)$.  By Claim \ref{52}, such $\M_0$ satisfies $T_0 \cup (\textrm{Th}(M))^P \cup (B)_F \cup (C)_F$, as well.  Thus $\M_0$ satisfies $T_0 \cup F$ as desired.

We have argued that $F \cup T_0$ is satisfiable for arbitrary finite subsets $F \subseteq S$, and so $S \cup T_0$ is satisfiable, as desired.
	\end{proof}

By the satisfiability of $S$ and Claim \ref{53}, the lemma is proved.
\end{proof}

	\subsection{Characterization: necessity}\label{209}

\begin{lem}\label{10}  Let $L'$ be a finite relational language containing a binary relation symbol for order, $<$, and let $\Uu$ be some nonempty collection of finite $L'$-structures that are linearly ordered by $<$.  Suppose that $\Uu$ has JEP and is closed under isomorphism and substructures.  

If $L'$-generalized indiscernibles have the modeling property for $\Uu$, then $\Uu$ is a Ramsey class
\end{lem}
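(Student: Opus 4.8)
The plan is to prove the contrapositive in spirit but actually to argue directly: assuming the modeling property for $\Uu$, I will deduce that $\Uu$ is a Ramsey class. Fix $A, B \in \Uu$ and a positive integer $k$; I must produce $C \in \Uu$ with $C \to (B)^A_k$. The natural device is a compactness argument. Suppose toward a contradiction that no $C \in \Uu$ satisfies $C \to (B)^A_k$. Then for every $C \in \Uu$ there is a bad $k$-coloring $\chi_C$ of $\binom{C}{A}$ admitting no homogeneous copy of $B$. Using JEP and closure under substructure, I can organize these colorings: let $I$ be an $L'$-structure with $\mathrm{age}(I) = \Uu$ (since $\Uu$ is a nonempty collection of finite linearly ordered structures closed under isomorphism and substructure with JEP, it is an age — e.g., the age of a Fra\"iss\'e-type limit or a suitable direct limit). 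I want to put a ``generic'' $k$-coloring of the $A$-copies inside $I$ that is bad on every finite piece.

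The key construction is to encode the coloring as target parameters. Build a first-order theory (in a language with $k$ new unary predicates, or a new $k$-valued function symbol on $r$-tuples where $r = |A|$) whose models carry: a copy of $I$, and for each increasing $r$-tuple realizing the quantifier-free type $\eta_A$ of $A$, a color in $\{1,\dots,k\}$, together with axioms saying ``no copy of $B$ is homogeneous.'' Finite satisfiability of this theory follows from the assumption that every $C \in \Uu$ has a bad coloring: any finite fragment mentions only finitely many $A$-copies, hence lies inside some finite $C \in \Uu = \mathrm{age}(I)$, and $\chi_C$ witnesses satisfiability. So there is a model, giving parameters $(a_i : i \in I)$ in some structure $M$ (where $a_i$ codes, via definable relations, the colors of all $A$-copies through $i$ and enough of the order structure) such that: distinct quantifier-free $L'$-types of $r$-tuples can have different target types (the color is read off definably), and no copy of $B$ inside $I$ is monochromatic.

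Now apply the modeling property for $\Uu$: there is an $L'$-generalized indiscernible $(b_j : j \in J)$ with $\mathrm{age}(J) = \Uu$, based on the $a_i$, living in $M_1 \models \mathrm{Th}(M)$. Because $(b_j)$ is $L'$-generalized indiscernible, any two increasing $r$-tuples from $J$ realizing the same quantifier-free $L'$-type (in particular, all the $A$-copies in $J$, which all realize $\eta_A$) map to the same $L$-type, hence get the \emph{same color} — so the induced coloring on $\binom{J}{A}$ is constant, and therefore every copy of $B$ in $J$ (and such copies exist since $B \in \Uu = \mathrm{age}(J)$) is homogeneous. But ``no copy of $B$ is homogeneous'' is a first-order (in fact universal, over the coloring data) statement that was forced by $\mathrm{Th}(M)$, or more directly: being based on the $a_i$ means every finite configuration of $(b_j)$ is realized among the $a_i$, so a homogeneous copy of $B$ in $J$ pulls back to a homogeneous copy of $B$ among the $a_i$ inside some finite $C \subseteq I$, contradicting badness of $\chi_C$. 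This contradiction shows some $C \in \Uu$ must satisfy $C \to (B)^A_k$, so $\Uu$ is a Ramsey class.

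The main obstacle I expect is the bookkeeping in the encoding: making sure the coloring of $\binom{C}{A}$ is recovered from the \emph{complete $L$-type} of the tuple $\ov{a}_{\ov i}$ in a way that is invariant under the $L'$-automorphisms of $A$ (so it is genuinely a coloring of $A$-substructures, not of enumerated tuples) and that the ``no homogeneous $B$'' axioms are expressible as a (consistent, finitely satisfiable) first-order theory whose violation by the indiscernible genuinely contradicts basedness. Handling the case where $A$ has nontrivial automorphisms requires care, but since all structures in $\Uu$ are linearly ordered by $<$, the automorphism group of each $A$ is trivial, so $A$-substructures correspond bijectively to $<$-increasing enumerations — exactly the situation already noted in the remark following Definition~\ref{30}. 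This collapses the subtlety and lets the argument go through cleanly; the remaining work is routine compactness plus an appeal to Lemma~\ref{4}-style stretching if one wants $\mathrm{age}(J)$ to be exactly $\Uu$ rather than merely contained in it.
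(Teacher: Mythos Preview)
Your proposal is correct and follows essentially the same route as the paper's proof. The paper organizes the argument slightly differently: it first isolates an intermediate ``infinite Ramsey'' property (2) --- that for any $I$ with $\mathrm{age}(I)=\Uu$ and any $k$-coloring of $\binom{I}{A}$ there is a homogeneous copy of $B$ in $I$ --- proves $(1)\Leftrightarrow(2)$ via an ultraproduct of the bad colorings $\chi_C$, and then separately proves that the modeling property implies (2) by encoding an \emph{arbitrary} coloring with $k$ relations $R_1,\dots,R_k$ on a target $M$ with $|M|=|I|$. You fuse these two steps into a single contradiction argument, using straight compactness in place of the ultraproduct; the encoding and the pull-back via basedness are the same. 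One small remark: the modeling property (Definition~\ref{105}) already guarantees $\mathrm{age}(J)=\Uu$, so your final comment about needing Lemma~\ref{4}-style stretching is unnecessary.
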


\begin{proof} First, we wish to show that two properties are equivalent.  Recall that $\Uu$ is a Ramsey Class if 

\vspace{.1in}
(1) for any $A, B \in \Uu$ and positive integer $k$, there is $C \in \Uu$ such that for any $k$-coloring $g: {C \choose A} \rightarrow k$, there is a $B' \subseteq C$ isomorphic to $B$, homogeneous for this coloring.
\vspace{.1in}

We claim that this is equivalent to,

\vspace{.1in}
(2) for any $A, B \in \Uu$ and positive integer $k$, for any $L'$-structure $I$ such that age($I$) = $\Uu$ and $k$-coloring $g: {I \choose A} \rightarrow k$, there is a $B' \subseteq I$ isomorphic to $B$, homogeneous for this coloring.   
\vspace{.1in}

\begin{clm} $(1) \Rightarrow (2)$
\end{clm}

\begin{proof} Given $A, B \in \Uu$ and $k>0$, we start with an $L'$-structure $I$ such that age($I$) = $\Uu$ and a $k$-coloring $g$ of the $A$-substructures of $I$.  By (1), there exists a $C \in \Uu$ Ramsey for $(B, A, k)$.  As age$(I)$ = $\Uu$, there is a copy of this $C$, $C' \subseteq I$.  By properties of $C$, any $k$-coloring of the $A$-substructures of $C'$ yields a substructure isomorphic to $B$ and homogeneous for this coloring.   In particular, we get $B' \subseteq C' (\subseteq I)$, $B' \cong B$, such that $B'$ is homogeneous for the coloring given by  the restriction of $g$ to the $A$-substructures of $C'$.  Thus (2) is proved.
\end{proof}

The converse is also true:

\begin{clm}\label{56} $(2) \Rightarrow (1)$\end{clm}
\begin{proof} Let $(E_{i} : i < \omega)$ be an enumeration of $\Uu$, starting with a structure of size at least $k$.  For every $\alpha < \omega$, there is a structure in $\Uu$ embedding all the $E_i$ for $i < (\alpha+1)$, by JEP.  Let $C_\alpha$ be one such structure (not necessarily unique):

$$C_\alpha:= \bigoplus_{i<(\alpha+1)} E_i$$

We note that:

\begin{enumerate}
\item each $C_\alpha$ is in $\Uu$, for all $\alpha < \omega$
\item for every structure $E \in U$, there is a $\beta_E < \omega$ such that $E$ embeds into the $C_\alpha$ for all $\alpha \geq \beta_E$.
\end{enumerate}

Suppose (1) fails, witnessed by $A,B,k$ where $A$ has cardinality $m$ and $B$ has cardinality $l$.  Thus each $C$ from $\Uu$ has a ``bad'' coloring $g_C : {C \choose A} \rightarrow \{1, \ldots, k\}$ such that no $B' \subseteq C$ isomorphic to $B$ has all $A$-substructures the same color.   Expand $L'$ to $L^+$ containing a new $m$-ary function symbol $h$ and new constant symbols $d_0, d_1, \ldots, d_k$.  

\begin{rmk} In a slight abuse of notation, in the next definition, we will apply the bad colorings $g_C$ to finite sequences of elements, by which we mean we apply the function to the set of members of the sequence. 
\end{rmk}

\begin{dfn}
Say that an interpretation of $f$ in an $L^+$-structure $C$ has property $(\ast)$ if:

\begin{equation}
h^{C}(a_1, \ldots, a_m) = 
\left \{ 
\begin{array}{ll}
d_{g_C(\ov{a})}^{C}, & \textrm{~if~} \{ a_i : i \leq m \} \textrm{~is isomorphic to~} A\\
\noalign{\medskip}
d_0^{C}, & \textrm{~otherwise~} \end{array} \right.
\end{equation}
 
\end{dfn} 

For the next definability claim, we use the following definitions.

\begin{dfn}
For arbitrary finite $L'$-structure $C$, $||C||=n_C$, define

$$p_C := \{ \theta(x_1, \ldots, x_{n_C}) : C \vDash (\exists s_1, \ldots s_{n_C})( \theta(s_1, \ldots, s_{n_C}) \wedge \bigwedge_{i<j} s_i < s_j) \}$$  
\end{dfn}

\begin{rmk} Note that $p_C$ is a finite type, as it describes the (increasing) $L'$-type of a finite structure, where $L'$ is finite relational. \end{rmk}

\begin{rmk}
For all $n_C$-element $L'$-structures, $Q$, $Q$ is isomorphic to $C$, for $||C||=n_C$ just in case 
$$\la q_1, \ldots, q_{n_C} \ra \vDash p_C(x_1, \ldots, x_{n_C})$$ 
where $q_1 < \ldots,\ < q_{n_C}$ are the elements of $Q$ in $<^Q_{L'}$-increasing enumeration.
\end{rmk}

Property $(\ast)$ is a definable condition. 

\begin{clm}For a finite $L^+$-structure $C$,  $h^C$ has property $(\ast)$ just in case
\vspace{.1in}

\hspace{-.2in} $C \vDash (\forall s_1, \ldots, s_m)( ( \bigwedge_{i<j} s_i < s_j) \rightarrow [ \neg \bigwedge p_A(s_1, \ldots, s_m) \wedge h(s_1, \ldots, s_m) = d_0]$

\vspace{.1in}

\begin{flushright}
$\vee\ [\bigwedge p_A(s_1, \ldots, s_{m}) \wedge (\bigvee_{1 \leq j \leq k} h(s_1, \ldots, s_m) = d_j)]$
\end{flushright}
\end{clm}

\begin{proof} clear. \end{proof}

For each $\alpha$, expand structure $C_{\alpha}$ so that 
\begin{enumerate}
\item the $d^{C_\alpha}_i$ are (any) distinct elements from the model, and 
\item $h^{C_\alpha}$ has property $(\ast)$.
\end{enumerate}

Let $\mathcal{D}$ be a nontrivial ultrafilter extending the cofinite filter on $\omega$, and let \\ $\mathcal{I}:= \prod_{\alpha < \omega} C_{\alpha} / \mathcal{D}$, the ultraproduct of the $C_\alpha$ with respect to $\mathcal{D}$.  

\begin{clm} age$(\mathcal{I}) \subseteq \Uu$ \end{clm}
\begin{proof} For any finite $L'$-structure $C$ not included in $\Uu$, there is a sentence $\varphi_C$ expressing that no substructure isomorphic to $C$ exists, in any model of $\varphi_C$.  Thus every $A \in \Uu$ models $\varphi_C$ for any $C \notin \Uu$.  By {\L}o\'{s}, $\mathcal{I} \vDash \varphi$ for any $\varphi$ in the common theory of the structures in $\Uu$.  Thus $\mathcal{I} \vDash \varphi_C$ for any $C \notin \Uu$.  So  age$(\mathcal{I}) \subseteq \Uu$, as desired.
\end{proof}

\begin{clm} age$(\mathcal{I}) \supseteq \Uu$ 
\end{clm}
\begin{proof} Let $A \in \Uu$.  By construction, there is a $\beta_A < \omega$ such that for all $\alpha > \beta_A$:
$$C_\alpha \vDash (\exists \ov{x}) (\bigwedge p_A(\ov{x}))$$
Thus, $\{ \alpha < \omega : C_\alpha \vDash (\exists \ov{x}) (\bigwedge p_A(\ov{x})) \}$ is in the filter, and so $\mathcal{I} \vDash (\exists \ov{x}) (\bigwedge p_A(\ov{x}))$ as well, implying that $\mathcal{I}$ embeds a copy of $A$.
\end{proof}

By the previous claims, age$(\mathcal{I})$ = $\Uu$.

By {\L}o\'{s}, we have that:

\vspace{.1in}

$\mathcal{I} \vDash (\forall s_1, \ldots, s_m)( ( \bigwedge_{i<j} s_i < s_j) \rightarrow$ 

\vspace{.1in}

\hspace{1in} $[ \neg \bigwedge p_A(s_1, \ldots, s_m) \wedge h(s_1, \ldots, s_m) = d_0]$

\vspace{.1in}

\begin{flushright}
$\vee\ [\bigwedge p_A(s_1, \ldots, s_{m}) \wedge (\bigvee_{1 \leq j \leq k} h(s_1, \ldots, s_m) = d_j)]$
\end{flushright}

\vspace{.1in}

Thus, we have a definable coloring on the $n$-element substructures of $\mathcal{I}$ given by the interpretation of $h$.  In fact, this is a $k$-coloring of $A$-substructures of $\Uu$, as every $n$-element substructure $Q \subseteq \Uu$ isomorphic to $A$ gets sent to one of the $d^{\mathcal{I}}_i$, for $1 \leq i \leq k$.

Thus, by (2), there is $B' \subseteq \Uu$ isomorphic to $B$ homogeneous with respect to this coloring.  So, for some fixed $1 \leq k_0 \leq k$ 

$$\hspace{-2in} \mathcal{I} \vDash (\exists x_1 \ldots, x_l) (\bigwedge p_B(x_1, \ldots, x_l) \wedge$$

$$\left[ \bigwedge_{(1 \leq i_1 < \ldots < i_m \leq l)} (\bigwedge p(x_{i_1}, \ldots, x_{i_m}) \rightarrow h(x_{i_1}, \ldots, x_{i_m}) = d_{k_0}) \right] )$$

But then cofinitely  many of the $C_{\alpha}$ model this sentence as well.  Choose any one of these, $C_{\alpha_0}$.  By modeling the above sentence, $C_{\alpha_0}$ has a substructure $B'$ isomorphic to $B$ all of whose $A$-substructures are colored the same color, $k_0$, under the coloring $h^{C_{\alpha_0}}$.  As $h^{C_{\alpha_0}}$ was interpreted to be the ``bad coloring'' $g_{C_{\alpha_0}}$, this is a contradiction.\end{proof}

Given that we have shown (1) $\Leftrightarrow$ (2), it suffices to show the following, in order to prove Lemma \ref{10}:

\begin{clm}  If $L'$-generalized indiscernibles have the modeling property for $\Uu$, then (2) holds.
\end{clm}

\begin{proof} To establish (2), fix $L'$-structure $I$ with age $= \Uu$ and $A,B,k$ and $g : {I \choose A} \rightarrow \{1, \ldots, k\}$ as in the statement.  We need to find $B' \subseteq I$ isomorphic to $B$, homogeneous for this coloring.

We want to reflect this as a coloring given by definable subsets of a target model, $M$.  $A$ has some finite size $n$.  Let $L$ be the language with $k$ $n$-ary relations, $R_1, \ldots, R_k$. and construct an $\La(L',L)$-structure $\M$ as follows:

\begin{enumerate}
\item $I(\M) = I$
\item $M(\M)$ is an $L$-structure with underlying set, $|M(\M)| = |I|$, whose interpretations of relations will be given in 4.
\item $f^{\M}(i) = i$
\item The relation $R_s$, $1 \leq s \leq k$, is interpreted as follows:

For $i_1, \ldots , i_n$ from $|M(\M)|$,  

$R^M_s(i_1, \ldots, i_n) \Leftrightarrow$

\begin{enumerate}
\item $I \vDash \bigwedge_{(1 \leq l<m \leq n)} (i_l < i_m)$
\item $I \vDash (\bigwedge p_A)(i_1, \ldots, i_n)$
\item $g(i_1, \ldots, i_n) = s$
\end{enumerate}
\end{enumerate}

By assumption, we can find an $L'$-generalized indiscernible $(b_j : j \in J)$ in $M_1 \vDash \textrm{Th}(M)$ based on the $(a_i : i \in I)$ indexed by an $L'$-structure $J$ with age $= \Uu$. Within the indiscernibility is the homogeneity of the coloring that we are looking for:

\

\begin{rmk} In the following discussion, it helps to remember that there are, in effect, two $\La(L',L)$-structures at work here: first there is $\M = (I, M, \la a_i : i \in I \ra)$, then there is the new indiscernible in a separate structure, $\C(J, M_1, \la b_j : j \in J \ra)$.  What these two $\La(L',L)$-structures have in common is that they both represent generalized indiscernibles mapping into models of the theory of $M$, and these generalized indiscernibles are indexed by models having the same age.
\end{rmk}

\ 

Since age($J$) = age($I$) = $\Uu$, there is a substructure $B' \subseteq J$ isomorphic to $B$ ($B$, as fixed in the beginning of this proof.)  We enumerate elements of $B'$ by $B' =: \{j_k : k \leq N \} \subseteq J$ where $j_k < j_m$, if $1 \leq k < m \leq N$.  Now use the modeling property: for $\Delta :=L$, there is some $i_1, \ldots, i_N$ such that 

\vspace{.1in} 

$\textrm{qftp}(i_1, \ldots, i_N; I) = \textrm{qftp}(j_1, \ldots, j_N; J)$, and

$$\textrm{tp}^{\Delta}(b_{j_1}, \ldots, b_{j_N}; M_1 ) = \textrm{tp}^{\Delta}(a_{i_1}, \ldots, a_{i_N}; M )$$ 

\begin{clm} $B'' := \{i_k : k \leq N\} \subseteq I$ is a copy of $B$ in $I$ that is homogeneous for the coloring, $g$.
\end{clm}

\begin{proof}  Clearly $B'' \cong B'$, by the choice of $i_1, \ldots, i_N$.  It remains to show that $B''$ is homogeneous for the coloring, $g$.

Let $A''_1$, $A''_2$ be two copies of $A$ in $B''$

$$A''_1 =: \la i_{s(1)}, \ldots, i_{s(n)} \ra$$
$$A''_2 =: \la i_{t(1)}, \ldots, i_{t(n)} \ra$$

Say that $A''_1$ is colored $g(A''_1)=:c_1$ and $A''_2$ is colored $g(A''_2)=c_2$.  We wish to show that $c_1 = c_2$.  

Because $g(A''_1)=c_1$, we have that $M \vDash R_{c_1}(a_{i_{s(1)}}, \ldots, a_{i_{s(n)}})$, by the original definition of $M$.  Likewise, since $g(A''_2)=c_2$, we have that $M \vDash R_{c_1}(a_{i_{t(1)}}, \ldots, a_{i_{t(n)}})$.

Since the $\ov{b}_{\ov{j}}$ and $\ov{a}_{\ov{i}}$ have the same $L$-type, the same is true of their subsequences: for all $l$, and increasing $1 \leq v(1) < \ldots < v(n) \leq N$:

\begin{enumerate}
\item[(i)] $M \vDash R_l( a_{i_{v(1)}}, \ldots, a_{i_{v(n)}}) \Leftrightarrow M_1 \vDash R_l(b_{j_{v(1)}}, \ldots, b_{j_{v(n)}})$
\end{enumerate}

Thus, we may conclude:

\begin{enumerate}
\item[(ii)] $M_1 \vDash R_{c_1}( b_{j_{s(1)}}, \ldots, b_{j_{s(n)}})$, $M_1 \vDash R_{c_2}(b_{j_{t(1)}}, \ldots, b_{j_{t(n)}})$
\end{enumerate}

Consider the counterparts to $A''_1$, $A''_2$ in $J$:

$$A'_1 := \la j_{s(1)}, \ldots, j_{s(n)} \ra$$
$$A'_2 := \la j_{t(1)}, \ldots, j_{t(n)} \ra$$

We make a few observations. Since $\ov{i} \cong \ov{j}$, the same is true for their subsequences: $A'_1 \cong A''_1$ and $A'_2 \cong A''_2$.  Since $A''_1 \cong A \cong A''_2$, we know that $A'_1 \cong A \cong A'_2$.  But then, since the $b_j$ are generalized indiscernible:

\begin{enumerate}
\item[(iii)] $M_1 \vDash R_{c_1}(b_{j_{s(1)}}, \ldots, b_{j_{s(n)}}) \leftrightarrow R_{c_1}(b_{j_{t(1)}}, \ldots, b_{j_{t(n)}})$
\end{enumerate}

\noindent Now we have that $M_1 \vDash R_{c_1}(b_{j_{t(1)}}, \ldots, b_{j_{t(n)}}) \wedge R_{c_2}(b_{j_{t(1)}}, \ldots, b_{j_{t(n)}})$, from (ii) and (iii).

The $R_l$ were defined in $M$ in such a way so that for $c_1 \neq c_2$, no tuple from $|M(\M)|$ satisfies both $R_{c_1}$ and $R_{c_2}$.  This is a definable condition modeled by $M_1$.  Thus, we must have that $c_1 = c_2$.
\end{proof}

Thus we have established that $B'' \subseteq I$ is homogeneous for the coloring $g$, and so we have established (2).
\end{proof}

As we have shown that (2) $\Rightarrow$ (1) in Claim \ref{56}, this ends the proof of Lemma \ref{10}
\end{proof}

\

\begin{thm}\label{7} Let $L'$ be a finite relational language containing a binary relation symbol for order, $<$, and let $\Uu$ be some nonempty collection of finite $L'$-structures that are linearly ordered by $<$.  Suppose that $\Uu$ has JEP and is closed under isomorphism and substructures.  

$\Uu$ is a Ramsey Class if and only if $L'$-generalized indiscernibles have the modeling property for $\Uu$.
\end{thm}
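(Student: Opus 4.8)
The plan is to observe that Theorem \ref{7} is nothing more than the conjunction of Lemma \ref{9} and Lemma \ref{10}, which were just established under exactly these hypotheses on $L'$ and $\Uu$. Concretely, the forward implication --- if $\Uu$ is a Ramsey class then $L'$-generalized indiscernibles have the modeling property for $\Uu$ --- is precisely Lemma \ref{9}, and the reverse implication --- if $L'$-generalized indiscernibles have the modeling property for $\Uu$ then $\Uu$ is a Ramsey class --- is precisely Lemma \ref{10}. So I would simply cite the two lemmas: together they yield the biconditional, and no further argument is needed.

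It is worth recalling where the actual content sits, since the present statement inherits it. For sufficiency one takes parameters $(a_i : i \in I)$ with $\mathrm{age}(I) = \Uu$, writes down the $\La(L',L)$-theory $S$ whose models code an $L'$-generalized indiscernible $(b_j : j \in J)$ --- indexed by a structure of age $\Uu$ and $L'$-based on the $a_i$ --- and then verifies finite satisfiability of $S$ by producing a suitable finite substructure $I_0 \subseteq I$; this is exactly the point where the Ramsey property of $\Uu$ enters, via the nested partition argument that builds a tower $Z_m \subseteq \cdots \subseteq Z_0$ over a list $q_1,\dots,q_m$ of quantifier-free types and pulls a homogeneous copy back inside $I$ (Claim \ref{50}). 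For necessity one first reduces ``Ramsey class'' to the internal partition statement (2) about a single structure of age $\Uu$, using an ultraproduct of the disjoint sums $C_\alpha = \bigoplus_{i<\alpha+1} E_i$; then one encodes a given coloring $g$ of the $A$-substructures of such an $I$ as definable $n$-ary relations $R_1,\dots,R_k$ on a target model $M$, applies the modeling property to obtain a based $L'$-generalized indiscernible, and reads monochromaticity of the pulled-back copy of $B$ off of generalized-indiscernibility together with the fact that no tuple can satisfy two distinct $R_s$.

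The real obstacle has thus already been cleared inside the two lemmas: on the sufficiency side the nested Ramsey construction of $I_0$, and on the necessity side the ultraproduct reduction from arbitrary witnessing structures $C$ to an internal partition statement about an age-$\Uu$ structure. Assembling Theorem \ref{7} itself presents no additional difficulty; the step that would ``cost'' something, namely reconciling the arbitrary index structure $I$ of age $\Uu$ appearing in the modeling property with the existence of a single ``large enough'' $C$ in the Ramsey-class definition, is precisely the $(1)\Leftrightarrow(2)$ equivalence carried out in the proof of Lemma \ref{10}.
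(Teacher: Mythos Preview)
Your proposal is correct and matches the paper's own proof exactly: the paper simply writes ``By Lemma \ref{9} and Lemma \ref{10}.'' Your additional paragraph recapitulating the content of those lemmas is accurate but unnecessary for the proof of Theorem \ref{7} itself.
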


\begin{proof} By Lemma \ref{9} and Lemma \ref{10}
\end{proof}

		\subsection{Consequences for $I$-indexed indiscernibles}\label{210}

We can draw a few consequences for $I$-indexed indiscernibles and the modeling property:

\begin{prop}\label{102}  Let $L'$ be a finite relational language and $\Uu$ the age of some  $L'$-structure.  Suppose $L'$-generalized indiscernibles have the modeling property for $\Uu$.  Then for any $L'$-structure $I$ with age$(I)$ = $\Uu$, $I$-indexed indiscernibles have the modeling property.
\end{prop}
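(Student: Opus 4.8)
The plan is to derive this from the hypothesis by composing two moves: first use the modeling property for $\Uu$ to produce an $L'$-generalized indiscernible that is based on the given parameters but indexed by \emph{some} structure of age $\Uu$, and then use Lemma \ref{4} to re-index that indiscernible onto $I$ itself, which is possible precisely because $I$ has the same age. Concretely, I would fix an $L'$-structure $I$ with $\mathrm{age}(I) = \Uu$ and an arbitrary set of parameters $(a_i : i \in I)$ in a model $M$; the goal is to produce an $L'$-generalized indiscernible $(c_i : i \in I)$ in some $M_1 \vDash \mathrm{Th}(M)$ based on the $a_i$. Since $\mathrm{age}(I) = \Uu$, Definition \ref{105} applies directly to the sequence $(a_i : i \in I)$ and yields an $L'$-generalized indiscernible $(b_j : j \in J)$ in some $M' \vDash \mathrm{Th}(M)$, indexed by an $L'$-structure $J$ with $\mathrm{age}(J) = \Uu$, and based on the $a_i$.

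Next I would stretch this back onto $I$. Since $\mathrm{age}(I) = \Uu = \mathrm{age}(J)$, in particular $\mathrm{age}(I) \subseteq \mathrm{age}(J)$, so Lemma \ref{4} --- applied with $(b_j : j \in J)$ as the given indiscernible and $I$ as the new index model --- furnishes an $(L', I)$-generalized indiscernible $(c_i : i \in I)$ in some $M_1 \vDash \mathrm{Th}(M') = \mathrm{Th}(M)$ that is $L'$-based on the $b_j$. This $(c_i : i \in I)$ is $L'$-generalized indiscernible, indexed by $I$, and lies in a model of $\mathrm{Th}(M)$, so all that would remain is to check that it is based on $(a_i : i \in I)$.

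For that final check I would verify directly from the definition that ``$L'$-based on'' composes. Given a finite set of $L$-formulas $\Sigma$ and a finite tuple $\ov{s}$ from $I$: since $(c_i)$ is $L'$-based on $(b_j)$, there is a tuple $\ov{u}$ from $J$ with the same quantifier-free $L'$-type as $\ov{s}$ and with $\mathrm{tp}^{\Sigma}(\ov{c}_{\ov{s}}; M_1) = \mathrm{tp}^{\Sigma}(\ov{b}_{\ov{u}}; M')$; applying the relation ``$(b_j)$ is based on $(a_i)$'' to this same $\Sigma$ and the tuple $\ov{u}$ then produces a tuple $\ov{t}$ from $I$ with the same quantifier-free $L'$-type as $\ov{u}$ and with $\mathrm{tp}^{\Sigma}(\ov{b}_{\ov{u}}; M') = \mathrm{tp}^{\Sigma}(\ov{a}_{\ov{t}}; M)$. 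Since equality of quantifier-free $L'$-type is transitive, $\ov{t}$ has the same quantifier-free $L'$-type as $\ov{s}$, and chaining the two equalities gives $\mathrm{tp}^{\Sigma}(\ov{c}_{\ov{s}}; M_1) = \mathrm{tp}^{\Sigma}(\ov{a}_{\ov{t}}; M)$; as $\Sigma$ and $\ov{s}$ were arbitrary, $(c_i : i \in I)$ is based on the $a_i$.

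The only real obstacle is the bookkeeping of indices and directions: the hypothesis hands us a sequence based on the $a_i$ but living over a possibly different index model $J$, and the role of Lemma \ref{4} is exactly that age equality lets us transport it back over $I$ while preserving the $L'$-based-on relation we need. I would also emphasize that $(a_i : i \in I)$ is \emph{not} assumed generalized indiscernible, so the final step must be argued from the raw definition of ``based on'' rather than from the $p^{\eta}$-characterization of Observation \ref{91}; but since that definition only asks for matching $\Sigma$-types across tuples of equal quantifier-free $L'$-type, the composition argument goes through verbatim.
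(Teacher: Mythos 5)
Your proposal matches the paper's proof essentially step for step: apply the modeling property for $\Uu$ to get a $J$-indexed indiscernible based on the $a_i$, then re-index onto $I$ via Lemma \ref{4} using $\mathrm{age}(I)=\mathrm{age}(J)=\Uu$, and conclude by transitivity of ``based on.'' The paper simply asserts transitivity in one clause; your explicit unpacking, together with the observation that the $a_i$ need not be indiscernible and so the argument must go through the raw definition rather than Observation \ref{91}, is a correct and slightly more careful rendering of the same step.
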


\begin{proof} Fix $L'$-structure $I$ with age$(I)$ = $\Uu$ and parameters $(a_i : i \in I)$ in some model $M$.  Since $L'$-generalized indiscernibles are assumed to have the modeling property for $\Uu$, there is an $L'$-structure $J$ with age$(J)$ = $\Uu$ and an $L'$-generalized indiscernible $(b_j : j \in J)$ based on the $a_i$ in some structure $M_1 \vDash \textrm{Th}(M)$.  Since age($I$) = age($J$) = $\Uu$, we know by Lemma \ref{4} that we may reindex our indiscernible to $I$.  In other words, there is $(c_k : k \in I)$, $c_k \in M_2 \vDash \textrm{Th}(M)$ based on the $b_j$ that is an $I$-indexed indiscernible.  Since to be based-on is transitive, the $c_k$ are based on the original $I$-indexed set of parameters, $a_i$.  
\end{proof}

\begin{rmk} Though we could have proven a version of Theorem \ref{7} solely for the above property, we prefer the more robust condition, ``$L'$-generalized indiscernibles have the modeling property for $\Kk$."
\end{rmk}

\begin{cor}\label{57}  Let $L'$ be a finite relational language and $T'$ a universal $L'$-theory.  Suppose the class of all finite $L'$-models of $T'$, $\Uu$, is a Ramsey class.  Suppose $I$ is a quantifier-free weakly-saturated model of $T'$.  Then $I$-indexed indiscernibles have the modeling property.
\end{cor}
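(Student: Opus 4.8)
The plan is to derive Corollary~\ref{57} as a straightforward combination of Theorem~\ref{7} and Proposition~\ref{102}. First I would observe that since $T'$ is a universal $L'$-theory and $L'$ is finite relational, the class $\Uu$ of all finite models of $T'$ is closed under isomorphism and under substructures (any substructure of a model of a universal theory is again a model), so $\Uu$ is an age; indeed $\Uu$ is precisely the age of any quantifier-free weakly-saturated model $I$ of $T'$, so $\mathrm{age}(I) = \Uu$. If $\Uu$ additionally has JEP (which is needed to invoke Theorem~\ref{7}), then we are in a position to apply that theorem; if JEP is not assumed here, I would note that it follows from the existence of the quantifier-free weakly-saturated model $I$, since any two finite models of $T'$ embed into finite substructures of $I$ and hence jointly embed into a single member of $\Uu$.

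Next, the hypothesis that $\Uu$ is a Ramsey class lets us invoke Theorem~\ref{7} (the direction that a Ramsey class implies the modeling property, i.e.\ Lemma~\ref{9}) to conclude that $L'$-generalized indiscernibles have the modeling property for $\Uu$. Then, since $\mathrm{age}(I) = \Uu$, Proposition~\ref{102} applies verbatim and yields that $I$-indexed indiscernibles have the modeling property. That is the entire argument: the corollary is the composite $(\text{Ramsey class}) \Rightarrow (\text{modeling property for }\Uu) \Rightarrow (I\text{-indexed indiscernibles have the modeling property})$, the first implication being Theorem~\ref{7} and the second being Proposition~\ref{102}.

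I do not expect a genuine obstacle here, since both ingredients are already proved; the only point requiring a small amount of care is verifying that the age of a quantifier-free weakly-saturated model of the universal theory $T'$ is exactly the class of finite models of $T'$. This uses: (i) every finitely generated (here, finite, since $L'$ is relational) substructure of a model of $T'$ is a model of $T'$, because $T'$ is universal; and (ii) by definition of quantifier-free weakly-saturated, every finite model of $T'$ is realized, up to isomorphism, as a substructure of $I$. Together these give $\mathrm{age}(I) = \Uu$, which is the hypothesis needed to feed $I$ into Proposition~\ref{102}. With that identification in hand, the proof is a two-line invocation of the prior results.

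\begin{proof}
Since $L'$ is relational, finitely generated $L'$-substructures are finite, and since $T'$ is universal, every finite substructure of a model of $T'$ is again a model of $T'$. Hence, if $I$ is a quantifier-free weakly-saturated model of $T'$, then $\mathrm{age}(I)$ consists exactly of the finite models of $T'$ up to isomorphism, i.e.\ $\mathrm{age}(I) = \Uu$. In particular $\Uu$ is closed under isomorphism and substructures, and it has JEP since any two members of $\Uu$ embed as finite substructures of $I$ and hence jointly embed into a common member of $\Uu$. As $\Uu$ is assumed to be a Ramsey class, Lemma~\ref{9} (one direction of Theorem~\ref{7}) gives that $L'$-generalized indiscernibles have the modeling property for $\Uu$. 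Since $\mathrm{age}(I) = \Uu$, Proposition~\ref{102} now yields that $I$-indexed indiscernibles have the modeling property.
\end{proof}
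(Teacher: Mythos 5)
Your proof is correct and follows essentially the same route as the paper: establish $\mathrm{age}(I)=\Uu$ using universality plus quantifier-free weak saturation, invoke Lemma~\ref{9} to obtain the modeling property for $\Uu$, then apply Proposition~\ref{102}. Your observation that JEP for $\Uu$ is best derived from the existence of $I$ (any two members of $\Uu$ embed as finite substructures of $I$, hence jointly into a finite substructure of $I$) rather than from universality alone is a small but worthwhile refinement, since universality of $T'$ by itself does not give JEP (e.g., the universal theory $\forall x\forall y\,(P(x)\leftrightarrow P(y))$ has two one-element models that cannot jointly embed), whereas your argument via $I$ is airtight.
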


\begin{proof}  Since $T'$ is universal in a finite relational language, $\Uu$ is easily closed under substructure and has JEP.   Since $\Uu$ is also assumed to be a Ramsey class, by Lemma \ref{9} we know that $L'$-generalized indiscernibles have the modeling property for $\Uu$.  By Proposition \ref{102}, it suffices to show that age$(I)$ = $\Uu$, in order to establish that $I$-indexed indiscernibles have the modeling property.

\hspace{.1in} (age$(I)$ $\subseteq$ $\Uu$): Given finite $B \subseteq I$, $B \vDash T'$ since $T'$ is a universal theory in a finite relational language and $B'$ is a substructure of $I \vDash T'$. Thus $B \in \Uu$.

\hspace{.1in} ($\Uu$ $\subseteq$ age$(I)$): Suppose $A \in \Uu$.  Thus $A \vDash T'$.  Since $p_A(\ov{x})$ is realized in $A \vDash T'$, it is a complete quantifier-free $L'$-type consistent with $T'$.  Since $I$ is a quantifier-free weakly saturated model of $T'$, $I$ contains a realization of $p_A(\ov{x})$, i.e. a copy of $A$.  Thus $A$ is in age$(I)$.
\end{proof}

The following is our main tool for classifying NIP theories.  (See \citep[Theorem~2.4]{lassh03} for a previous result proving a modeling property on 2-types for ordered graph-indiscernibles.)

\begin{cor} For any quantifier-free weakly saturated ordered graphs $I$, $I$-indexed indiscernibles have the modeling property.
\end{cor}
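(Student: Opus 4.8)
The plan is to obtain this as an immediate instance of Corollary \ref{57}, taking $L' := L_g$ and $T' := T_g$. First I would check the hypotheses of that corollary. The language $L_g = \{<, R\}$ is finite and relational, and $T_g$ --- the theory asserting that $<$ is a linear order and that $R$ is symmetric and loop-free --- is axiomatized by universal sentences, hence is a universal $L_g$-theory. Next I would identify the class $\Uu$ of all finite $L_g$-models of $T_g$: by definition this is exactly $\K_g$, the class of all finite linearly ordered symmetric loopless graphs.

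The substantive input is then that $\K_g$ is a Ramsey class, which is precisely Corollary \ref{33} (itself a special case of the $\nr$ theorem, Theorem \ref{32}). With this in hand, the hypotheses of Corollary \ref{57} are met: $\Uu = \K_g$ is the class of finite models of the universal theory $T_g$ in the finite relational language $L_g$, and it is a Ramsey class. Finally, a quantifier-free weakly saturated ordered graph is by definition a quantifier-free weakly saturated model of $T_g$, so Corollary \ref{57} applies directly and yields that $I$-indexed indiscernibles have the modeling property.

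There is no real obstacle remaining at this stage: all the work --- the partition theorem for ordered graphs, the equivalence between Ramsey classes and the modeling property for $\Uu$ (Theorem \ref{7}), and the reindexing argument of Proposition \ref{102} packaged inside Corollary \ref{57} --- has already been carried out. The only point requiring mild care is the match-up between the two notions of ``quantifier-free weakly saturated'' appearing in the paper; for the universal theory $T_g$ in a finite relational language these coincide, as the proof of Corollary \ref{57} shows (every finite substructure of a model of $T_g$ again models $T_g$, and every finite model of $T_g$ realizes a complete quantifier-free $L_g$-type consistent with $T_g$), so no ambiguity arises and the cited corollary applies verbatim.
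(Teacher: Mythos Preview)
Your proposal is correct and follows essentially the same approach as the paper: invoke Corollary~\ref{33} to conclude that $\K_g$ is a Ramsey class, and then apply Corollary~\ref{57} with $L' = L_g$, $T' = T_g$, and $\Uu = \K_g$. The additional checks you spell out (that $L_g$ is finite relational and $T_g$ is universal) are implicit in the paper's two-line proof.
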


\begin{proof}  We know by Corollary \ref{33} that the class of all finite ordered graphs is a Ramsey class.  Let $\Uu$ in Corollary \ref{57} be the class of all finite ordered graphs.
\end{proof}

\begin{rmk}[existence of generalized indiscernibles not equivalent to the modeling property] This is a variant of an example in \cite{ne05}.  Consider ordered graphs of girth $>4$ as a class $\Uu$ that is not an amalgamation class (to see this, take two vertices in your amalgamation base, each of which is connected to one point in extending graphs $B_1, B_2$.)  Because $\Uu$ is not an amalgamation class, it cannot be a Ramsey class, by Theorem \ref{58}.  We have the \emph{existence} of $I$-indexed indiscernibles for all $I$ with age $= \Uu$.  By applying the classical Ramsey theorem to the reduct structure $(I,<)$, we obtain an order indiscernible indexed by $I$, which is, a fortiori, an $I$-indexed indiscernible.  However, we do not have that $I$-indiscernibles have the modeling property for $\Uu$, by Theorem \ref{7}: it is clear that $\Uu$ is closed under isomorphism and substructures; it has JEP because the disjoint union of graphs with girth  $>4$ has girth $>4$ and any partial order may be extended to a linear order. 
\end{rmk}

		\section{Characterizing NIP}\label{211}

In this section we characterize NIP theories by way of ordered graph-indiscernibles.  This characterization is modeled on the classical characterization from \cite{sh78} of stable theories as those theories in whose models infinite indiscernible sequences must be indiscernible sets.  Recall the definition of an NIP theory:

\begin{dfn}  A theory $T$ has the \emph{independence property (IP)} if there is a partitioned formula $\varphi(x;y)$ in the language of the theory with the following property: for every $n$, there are parameters $a_1, \ldots, a_n$, $b_1, \ldots, b^{2^n}$ in some model of the theory such that, where $\{w_t : 1 \leq t \leq 2^n \}$ enumerates the subsets of $[n]$:
$$\varphi(b_t;a_i) \Leftrightarrow i \in w_t$$
\noindent If a theory fails to have the independence property, we call the theory \emph{NIP}.
\end{dfn}

In this section we prove the two lemmas which will give our characterization of NIP theories by ordered graph indiscernibles.  First we set some conventions.  Throughout this section, a variable $x$ may refer to a finite sequence of variables, just as we have let $a$ refer to a finite sequence of parameters from a model.  What we call ``ordered graph'' in this section will be a linearly ordered symmetric graph with no loops.    By a \emph{complete ordered graph type} we will mean a complete quantifier-free $L_g$-type consistent with $T_g$.  By a \emph{complete $R$-type} we will mean a complete quantifier-free $\{R\}$-type consistent with $T_g$; similarly, a \emph{complete order type} will mean a complete quantifier-free $\{<\}$-type consistent with $T_g$.  Note that when we talk about subgraphs, we mean substructures of graphs, which is equivalent to the subgraphs being induced subgraphs.  

In the following lemma, we use the modeling property to prove one direction of our characterization theorem.

\begin{lem}[$\Leftarrow$]\label{1}  Suppose $T$ has the independence property.  Then there exists an infinite quantifier-free weakly-saturated ordered graph-indiscernible in a model of $T$ which fails to be an indiscernible sequence.\end{lem}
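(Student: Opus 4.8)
The plan is to use the independence property to build an $I$-indexed set of parameters, indexed by the random ordered graph $\R^<$, in which a single formula ``reads off'' the edge relation of $I$, and then to turn this set into an ordered graph-indiscernible by invoking the modeling property developed in Section~\ref{210}. Since being \emph{based on} preserves finite formula-types, the resulting indiscernible will still detect edges; consequently two pairs of indices with the same quantifier-free $\{<\}$-type but different edge relation will receive different $L$-types, witnessing that the indiscernible is not an indiscernible sequence.

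\emph{Step 1: coding a finite ordered graph by instances of $\varphi$.} Let $\varphi(x;y)$ be a partitioned formula witnessing IP for $T$, and put $I := \R^<$. For each $i \in I$ introduce new constants $a_i$ (of length $|y|$) and $b_i$ (of length $|x|$), and consider, over $T$, the set of sentences $\{\varphi(b_j; a_i) : I \vDash R(i,j)\} \cup \{\neg \varphi(b_j; a_i) : i \neq j,\ I \vDash \neg R(i,j)\}$. This is finitely satisfiable: any finite fragment mentions only indices spanning a finite ordered subgraph $\{i_1 < \dots < i_k\} \subseteq I$; taking IP-parameters $a_1,\dots,a_k$ for $\varphi$ in a model of $T$, declaring $a_{i_l} := a_l$, and letting $b_{i_m}$ be the IP-witness for the neighbourhood $\{\, l \leq k : I \vDash R(i_l, i_m) \,\} \subseteq [k]$ gives $\varphi(b_{i_m}; a_{i_l}) \Leftrightarrow I \vDash R(i_l, i_m)$ for all $l \neq m$, which realizes the fragment. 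By compactness there is $M \vDash T$ carrying such $(a_i, b_i)_{i \in I}$. Set $c_i := (a_i, b_i)$, obtaining an $I$-indexed set $(c_i : i \in I)$ of tuples, and let $\psi(x_1, x_2)$ be the $L$-formula obtained from $\varphi$ by substituting the ``$b$-block'' of $x_2$ for $x$ and the ``$a$-block'' of $x_1$ for $y$, so that $M \vDash \psi(c_i, c_j) \Leftrightarrow I \vDash R(i,j)$ whenever $i \neq j$.

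\emph{Steps 2 and 3: pass to an indiscernible, and show it is bad.} Since $\R^<$ is a quantifier-free weakly-saturated ordered graph and the class of finite ordered graphs is a Ramsey class (Corollary~\ref{33}), Corollary~\ref{57} (with $T' := T_g$) shows that $\R^<$-indexed indiscernibles have the modeling property. Applying it to $(c_i : i \in I)$ produces an ordered graph-indiscernible $(c'_i : i \in I)$ in some $M' \vDash \textrm{Th}(M)$, hence $M' \vDash T$, that is based on the $c_i$; this is an infinite quantifier-free weakly-saturated ordered graph-indiscernible in a model of $T$. Because $\textrm{age}(I) = \K_g$, fix $i_1 < i_2$ with $I \vDash R(i_1,i_2)$ and $j_1 < j_2$ with $I \vDash \neg R(j_1,j_2)$; the pairs $(i_1,i_2)$ and $(j_1,j_2)$ have the same quantifier-free $\{<\}$-type. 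Using ``based on'' with $\Delta := \{\psi\}$, the pair $(c'_{i_1}, c'_{i_2})$ shares its $\psi$-type with some $(c_{t_1}, c_{t_2})$ where $(t_1,t_2) \cong_{L_g} (i_1,i_2)$, so $t_1 < t_2$, $I \vDash R(t_1,t_2)$, and $M' \vDash \psi(c'_{i_1}, c'_{i_2})$; symmetrically $M' \vDash \neg \psi(c'_{j_1}, c'_{j_2})$. Hence $\textrm{tp}^L(c'_{i_1}, c'_{i_2}) \neq \textrm{tp}^L(c'_{j_1}, c'_{j_2})$ although the index pairs have the same order type, so $(c'_i : i \in I)$ is not an indiscernible sequence.

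The point requiring care is Step~1: one must verify that the edge relation of an arbitrary finite ordered graph genuinely is encoded by instances of the single IP formula $\varphi$ on tuples indexed by the vertices (this is exactly IP applied, vertex by vertex, to neighbourhoods), and then, in Step~3, that $\psi$ survives the modeling property so as to be realized on both an edge and a non-edge — which is automatic since $\textrm{age}(I)$ contains both $2$-element ordered graphs. Everything else is routine unwinding of the definitions of ``based on'' and of ordered graph-indiscernible.
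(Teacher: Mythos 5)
Your proof is correct and follows the same global strategy as the paper: build an $\R^<$-indexed set of parameters in a model of $T$ on which a single formula detects the edge relation of $\R^<$, apply the modeling property for ordered graphs (Corollary~\ref{57} via Corollary~\ref{33}) to obtain an ordered graph-indiscernible based on it, and then read off edges and non-edges between same-order-type pairs to force distinct $L$-types, showing the result is not an indiscernible sequence. Where you genuinely diverge is in how the graph is coded. The paper invokes Lemma~2.2 of \cite{lassh03}, which upgrades IP to a \emph{symmetric} formula $\varphi(x;y)$ with $|x|=|y|$ coding arbitrary finite graph relations on single parameters, and then by $\aleph_0$-saturation pulls out a countable set $A\subseteq M$ on which $\varphi$ reproduces the graph reduct of $\R^<$. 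You instead build the coding from scratch out of the raw IP definition: you index \emph{pairs} $c_i = (a_i,b_i)$ of IP-parameters by the vertices of $\R^<$, take $b_i$ to witness the $\varphi$-neighbourhood matching $i$'s graph-neighbourhood, verify finite satisfiability via the IP definition directly, and read off edges with the asymmetric diagonal formula $\psi(x_1,x_2)$. This is more self-contained (no appeal to the Laskowski--Shelah lemma) at the cost of doubling the parameter length; the asymmetry of $\psi$ is harmless because the graph relation being coded is symmetric, so $\psi$ is symmetric in its effect on the $c_i$'s even if not syntactically. The rest of the argument---using ``based on'' with $\Delta=\{\psi\}$ and the two-element substructures of $\R^<$---is a clean unwinding of the definitions and matches the paper's final step in substance.
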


\begin{proof}  In fact we will find an ordered graph-indiscernible indexed by $\R^{<}$, the random ordered graph, satisfying these conditions.

Let $T$ be as above and choose $M \vDash T$ to be some $\aleph_0$-saturated model of $T$.  By Lemma 2.2 of \cite{lassh03}, since $T$ has IP there exists a formula $\varphi(x;y)$ that ``codes graphs'' in $T$.  In other words, $\varphi$ is a symmetric formula that embeds every finite graph relation.  Let $G \subseteq |M|^2$ be the realizations of $\varphi$ in $M$.  Since $M$ is countably saturated, by compactness there is countable $A \subseteq M$ so that $\R^{<} \uphp \{R\} \cong_f \A :=(A, G \uphp A \times A)$.  Now consider as our intial set of parameters the elements of $A$ indexed by elements of $\R^{<}$ by way of this mapping: $(a_g : g \in \R^{<})$, where $a_g := f(g)$. 

From the previous section we know that $I$-indexed indiscernibles have the modeling property, where $I$ is a quantifier-free weakly-saturated model of $T_g$.  In particular, for $I$=$\R^{<}$, $I$-indexed indiscernibles have the modeling property.  Thus we can find $\R^{<}$-indiscernible $(b_g : g \in \R)$ in some $M_1 \vDash T$ based on the $a_g$.   It remains to show that this sequence is not order-indiscernible.  For this we establish:

\begin{clm} $M_1 \vDash \varphi(b_g,b_h) \Leftrightarrow \R^{<} \vDash R(g,h)$.
\end{clm}

\begin{proof}  Suppose $M_1 \vDash \varphi(b_g,b_h)$, then there exists $(a_e,a_f)$, qftp$^{\R^{<}}(g,h)$ = qftp$^{\R^{<}}(e,f)$, such that $M \vDash \varphi(a_e,a_f)$.  However, $M \vDash \varphi(a_e,a_f)$ just in case $R^{\A}(a_e,a_f)$, just in case $\R^{<} \vDash R(e,f)$.  Since qftp$^{\R^{<}}(g,h)$ = qftp$^{\R^{<}}(e,f)$, we conclude that $\R^{<} \vDash R(g,h)$.

For the other direction, suppose $M_1 \vDash \neg \varphi(b_g,b_h)$, then there exists $(a_e,a_f)$, qftp$^{\R^{<}}(g,h)$ = qftp$^{\R^{<}}(e,f)$, such that $M \vDash \neg \varphi(a_e,a_f)$.  However, $M \vDash \neg \varphi(a_e,a_f)$ just in case $R^{\A}(a_e,a_f)$ fails, just in case $\R^{<} \vDash \neg R(e,f)$.  Since qftp$^{\R^{<}}(g,h)$ = qftp$^{\R^{<}}(e,f)$, we conclude that $\R^{<} \vDash \neg R(g,h)$.
\end{proof}

By the axioms in $Th(\R^{<})$ we can easily find $i_1,i_2,j_1,j_2$ in $\R^{<}$ such that
 
\begin{itemize}
\item $i_1<i_2 \wedge i_1Ri_2$, and
\item $j_1<j_2 \wedge \neg j_1Rj_2$ 
\end{itemize}

Thus, 
\begin{itemize}
\item $M_1 \vDash \varphi(\ov{b}_{i_1},\ov{b}_{i_2})$, but
\item $M_1 \vDash \neg \varphi(\ov{b}_{i_1},\ov{b}_{i_2})$
\end{itemize}
showing that the $b_g$ fail to be an indiscernible sequence.
\end{proof}

\begin{lem}[$\Rightarrow$]\label{2} Suppose some quantifier-free weakly-saturated ordered graph-indiscernible in a model $M_1 \vDash T$ fails to be an indiscernible sequence.  Then $T$ has the independence property.
\end{lem}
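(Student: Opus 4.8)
The goal is to show: if some quantifier-free weakly-saturated ordered graph-indiscernible $(b_g : g \in I)$ in a model $M_1 \vDash T$ fails to be an indiscernible sequence, then $T$ has the independence property. Since $I$ is quantifier-free weakly-saturated with $\mathrm{age}(I) = \K_g$, I may as well (via Lemma~\ref{4}, stretching) assume the index model is $\R^<$, the random ordered graph; this loses nothing and gives me the full combinatorial freedom of $\mathrm{Th}(\R^<)$ when choosing index tuples. Failure of order-indiscernibility means there is a formula $\psi(x_1, \ldots, x_n)$ and two $<$-increasing tuples $\bar{g}, \bar{h}$ from $I$ with $M_1 \vDash \psi(\bar{b}_{\bar g})$ and $M_1 \vDash \neg\psi(\bar{b}_{\bar h})$. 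By $L_g$-generalized indiscernibility, $\bar{g}$ and $\bar{h}$ must differ in their quantifier-free $\{R\}$-type (their order types agree, both being increasing), so there is a pair of positions $k < l$ on which $\bar{g}$ has an $R$-edge and $\bar{h}$ does not (or vice versa). Massaging $\psi$ and fixing all coordinates other than two ``active'' ones as constant parameters, I extract a partitioned formula $\varphi(x; \bar{c})$ — really a formula with parameters $b_{g_i}$ for the inert coordinates — such that whether $\varphi$ holds of a pair $(b_{u}, b_{v})$ (with $u < v$ in $I$) depends only on whether $u R v$ holds in $I$.

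**Building the independence configuration.** Once I have a formula whose truth on increasing pairs from the indiscernible tracks the edge relation of $\R^<$, I exploit the richness of $\R^<$: for every finite bipartite pattern I can realize it as an induced ordered subgraph. Concretely, to witness IP at level $m$, I want parameters $a_1, \ldots, a_m$ and $\{d_t : t \le 2^m\}$ so that $d_t$ is $\varphi$-connected to exactly the $a_i$ with $i \in w_t$. I choose, inside $\R^<$, vertices $u_1 < \cdots < u_m$ (the ``$a$-side''), and then for each subset $w \subseteq [m]$ a vertex $v_w$ placed $<$-above all the $u_i$, joined by an $R$-edge to exactly those $u_i$ with $i \in w$ — this configuration lives in $\mathrm{age}(\R^<) = \K_g$, hence is realized. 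Pushing forward along the indiscernible, $b_{v_w}$ satisfies $\varphi$ with $b_{u_i}$ precisely when $i \in w$. That is exactly the independence property for $\varphi$ with the roles of parameter and variable as dictated by the symmetry of $\varphi$ on the edge relation; possibly I first replace $\varphi$ by a symmetrized version so that the order between the two active vertices does not matter, which is the step that uses symmetry of $R$ in $T_g$.

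**The main obstacle.** The delicate point is the reduction from the arbitrary witnessing formula $\psi(x_1, \ldots, x_n)$ — which sees a whole $n$-vertex ordered subgraph — down to a two-variable formula governed purely by a single $R$-edge. A priori $\bar g$ and $\bar h$ could differ in several edges at once, and freezing coordinates could interact badly with the order constraints inside $\R^<$. The right way to handle this is a ``path'' or interpolation argument: between the $\{R\}$-type of $\bar g$ and that of $\bar h$, interpolate through a sequence of increasing $n$-tuples each differing from the next in exactly one edge (possible because $\R^<$ realizes every intermediate ordered graph on the same $n$ ordered vertices); indiscernibility makes $\psi$ constant along any step that does not change the $\{R\}$-type, so the value of $\psi$ must flip across some single-edge step. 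That isolates one edge and two active coordinates, with all others held at fixed index values serving as parameters; renaming, one obtains the desired $\varphi$. After that, the construction of the independence pattern above is routine, and one concludes by noting that $\varphi$ (or its symmetrization) has IP in $M_1 \vDash T$, hence $T$ has IP.
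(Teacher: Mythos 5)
Your proposal follows essentially the same approach as the paper's proof: stretch the index to $\R^<$, then interpolate between the $\{R\}$-types of the two witnessing tuples one edge-flip at a time to isolate a single edge where the formula's truth value changes (the paper phrases this as the action of $\prod_s \Z/2\Z$ on complete $R$-types), freeze the remaining coordinates as parameters, and realize the bipartite IP pattern in $\R^<$ via weak homogeneity. One minor point: the paper does not ``symmetrize'' $\varphi$ as you suggest might be necessary; instead it fixes the complete order type $p'$ of the two active coordinates over the parameters throughout, so that the only difference between the two cases $F$ and $G$ is the presence or absence of the $R$-edge.
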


\begin{proof} Let $T$, $M_1$ be as above and $\mathcal{I} := (\ov{a}_i:i \in I)$ an $I$-indiscernible in $M$ as above.    By Claim \ref{4}, we may assume our indiscernible is an $\R^{<}$-indiscernible, by stretching our given sequence to an $\R^{<}$-indiscernible $\mathcal{J}$ such that $p^{\eta}(\mathcal{J}) = p^{\eta}(\mathcal{I})$ for all complete quantifier-free $(L_g,n)$-types, $\eta$.  This new indiscernible will retain the property of not being order-indiscernible.  We may assume that $M_1$ is $\aleph_0$-saturated by taking an elementary extension.  We will show that $T$ is forced to have the independence property.

By assumption that $\mathcal{I}$ fails to be an indiscernible sequence, for some $n$ and complete order-type $p = \{x_1 < \ldots < x_n\}$, two $n$-tuples $\ov{i}$, $\ov{j}$ satisfying $p(x_1, \ldots, x_n)$ map to $\ov{b}_{\ov{i}}$, $\ov{b}_{\ov{j}}$ in $M$ with different $\theta(x_1, \ldots, x_n)$-types for some $\theta$ in $L$.  Without loss of generality, $\theta(\ov{i})$ and $\neg \theta(\ov{j})$.

\begin{dfn} With respect to a complete $R$-type $q(x_1, \ldots, x_n)$, by the \emph{$R$-truth value} of the pair $(x_s,x_r)$ for $1 \leq s < r \leq n$ we mean $0$ if $R(x_s,x_r)$ occurs in $q$, and $1$, if $R(x_s,x_r)$ does not occur in $q$.\end{dfn}

\begin{clm}\label{5} We may assume that the types of $\ov{i}$, $\ov{j}$ in $x_1, \ldots, x_n$ are identical but for disagreement in the $R$-truth value of $(x_k,x_l)$, for some $1 \leq k<l \leq n$: i.e. that
\begin{itemize}
\item[] tp$(i_k/\{i_s:s \neq k,l\})$=tp$(j_k/\{j_s:s \neq k,l\})$, 
\item[] tp$(i_l/\{i_s:s \neq k,l\})$=tp$(j_l/\{j_s:s \neq k,l\})$, 
\item[] tp$(i_1 \ldots i_{k-1},i_{k+1} \ldots i_{l-1},i_{l+1} \ldots i_n)$

\hspace{1in} =tp$(j_1 \ldots j_{k-1},j_{k+1} \ldots j_{l-1},j_{l+1} \ldots j_n)$, 

and
\item[] $i_kRi_l$ and $\neg j_kRj_l$.
\end{itemize}
\end{clm}

\begin{proof}  Consider the set $X$ of all complete increasing $R$-types in $x_1, \ldots, x_n$.  Let $s:={n\choose 2}$.  If we enumerate the unordered pairs from $n$ we can identify each $R$-type with a tuple from $\prod_s \Z / 2\Z$. Identify an $R$-type in variables $(x_1, \ldots, x_n)$ with $\eta \in \leftexp{s}{2}$ if for the $i$th pair, $(x_r,x_t)$, $\eta(i)=0$ iff $R$ holds on this pair.  The group $G=\prod_s \Z / 2\Z$ acts on $X$ transitively by left-addition.

Our $I$-indexed indiscernible gives a function that sends quantifier-free types in $I$ to $\{\theta, \neg \theta\}$, by $\eta \mapsto \theta$ if some/all $\ov{i}$ from $I$ with quantifier-free type $\eta$ give $\theta(\ov{b}_{\ov{i}})$, and $\eta \mapsto \neg \theta$ if some/all $\ov{i}$ from $I$ with quantifier-free type $\eta$ give $\neg \theta(\ov{b}_{\ov{i}})$.  This naturally gives a mapping from $X \rightarrow \{\theta,\neg \theta\}$.

Let $A$ be the set of complete $R$-types in $X$ that map to $\theta$, $B$, the set of types in $X$ that map to $\neg \theta$.  By failure of order-indiscernibility, each of $A$ and $B$ contain some element, (the quantifier-free types of $\ov{i}$,$\ov{j}$, at least) call these elements $a$, $b$ respectively.  The group $\prod_s \Z / 2\Z$ is generated by the set of $e_t=(g_1, \ldots, g_s)$, for $t \leq s$, where $g_v=1 \leftrightarrow v=t$.  By transitivity, some group element $g$ takes $a$ to $b$.  This element $g$ is a sum $(h_q) + \ldots + (h_1)$ from the generating set, $\{e_v : v \leq s\}$.  Since, $g + a$ fails to be in $A$, there is some least $m$ such that $(h_m) + \ldots + (h_1) + a$ fails to be in $A$.  

Call $(h_{m-1}) + \ldots + (h_1) + a =: a'$, and $(h_m) + (h_{m-1}) + \ldots + (h_1) + a =: b'$.  We may replace $\ov{i}$ with $\ov{i}' \vDash a'$, and $\ov{j}$ with $\ov{j}' \vDash b'$.  Thus we have two realizations of $p(x_1, \ldots, x_n)$ whose representations in $X$ differ at one coordinate, i.e. which satisfy two complete quantifier-free types in $L_g$ that differ only in the alternation of an $R$-truth value at one pair of variables, call them $(x_k,x_l)$.  Without loss of generality, $R(i_k,i_l)$ and $\neg R(j_k,j_l)$. 
\end{proof}

From now on assume $\ov{i}$, $\ov{j}$ satisfy the properties in Claim \ref{5}.

\begin{clm}\label{6} We may assume that the $j_s$ equal the $i_s$ for $s \neq k,l$.\end{clm}
\begin{proof} tp($i_1 \ldots i_{k-1},i_{k+1} \ldots i_{l-1},i_{l+1} \ldots i_n$) 

\noindent = tp($j_1 \ldots j_{k-1},j_{k+1} \ldots j_{l-1},j_{l+1} \ldots j_n$), and $\R^{<}$ is ultrahomogeneous, so we may extend the embedding $\sigma$ that sends $i_s \mapsto j_s$ for $s \neq k,l$ to an elementary embedding $f$ that is also defined on $i_k,i_l$.  Replace $\ov{i}$ with its image under $f$, $\ov{i}'$, and keep $\ov{j}$ the same.
\end{proof}

Per Claim \ref{6}, rename 
\begin{itemize}
\item[] ($i_1' \ldots i_{k-1}',i_{k+1}' \ldots i_{l-1}',i_{l+1}' \ldots i_n'$)=($j_1 \ldots j_{k-1},j_{k+1} \ldots j_{l-1},j_{l+1} \ldots j_n$)\\  as $=:(a_3,\ldots, a_n)$, 
\item[] $(i_k',i_l')$ as $=: (i^*_1,i^*_2)$, and
\item[] $(j_k,j_l)$ as $=: (j^*_1,j^*_2)$.  
\end{itemize}

We have the following ($\theta'$ is obtained from $\theta$ by some permutation of variables):

\begin{enumerate}
\item qftp$^{\R^{<}}_{\{<\}}(i^*_1,i^*_2/a_3, \ldots, a_n)$=qftp$^{\R^{<}}_{\{<\}}(j^*_1,j^*_2/a_3, \ldots, a_n)=:p'(z_1,z_2)$
\item qftp$^{\R^{<}}(i^*_1/a_3, \ldots, a_n)$=qftp$^{\R^{<}}(j^*_1/a_3, \ldots, a_n)=:u(z_1)$
\item qftp$^{\R^{<}}(i^*_2/a_3, \ldots, a_n)$=qftp$^{\R^{<}}(j^*_2/a_3, \ldots, a_n)=:v(z_2)$
\item $R(i^*_1,i^*_2)$
\item $\neg R(j^*_1,j^*_2)$
\item $\theta'(\ov{b}_{i^*_1},\ov{b}_{i^*_2},\ov{b}_{a_3}, \ldots, \ov{b}_{a_n})$
\item $\neg \theta'(\ov{b}_{j^*_1},\ov{b}_{j^*_2},\ov{b}_{a_3}, \ldots, \ov{b}_{a_n})$
\end{enumerate}

\begin{clm} $p'(z_1,z_2,a_3, \ldots, a_n) \cup u(z_1) \cup v(z_2)$ $\cup$ $R(z_1,z_2)$$=:F$ is a complete quantifier-free type in S$^{\R^{<}}_2(\{a_3, \ldots, a_n\})$, as is $p'(z_1,z_2,a_3, \ldots, a_n) \cup u(z_1) \cup v(z_2)$ $\cup$ $\neg R(z_1,z_2)$$=:G$
\end{clm}

\begin{proof} Since $L_g$ is a signature consisting of two binary relations, every quantifier free $L_g$-type in $(x_1, \ldots, x_m)$ is determined by the quantifier-free types of the pairs $(x_i,x_j)$.  Moreover, the union of a complete $<$-type on $(x_1, \ldots, x_m)$ and a complete $R$-type on the same variables, yields a complete quantifier-free type in $L_g$.  This is because no quantifier-free $L_g$ formula is obtained by a composition, but only as a conjunction of positive or negative instances of $v_1Rv_2$ and $v_1<v_2$.

It suffices to replace parameters $a_i$ by variables $z_i$ in $F$ for $3 \leq i \leq n$, adjoin the complete quantifier-free type of $(a_3, \ldots, a_n)$, and show that the resulting type $F'$ is a complete quantifier-free type in $(z_1, \ldots, z_n)$.  We already know that the order-type is complete, from 1., above.  As for the $R$-type, pairs between $a_i$, $a_j$ are accounted for, and 2., 3., and 4. determine all pairs between $z_i$ and $a_j$, or $z_i$ and $z_j$.

The argument for $G$ proceeds similarly.    
\end{proof}

\begin{dfn}Split the variables of $\theta'$ so that $\theta'(x;\ov{y})=\theta'(x_1;x_2,x_3, \ldots, x_n)$\end{dfn}

\begin{clm} $\theta'(x;\ov{y})$ has the independence property.
\end{clm}

\begin{proof} Fix a finite integer $m$ and enumerate the subsets of $\{1, \ldots, m\}$ as $(w_s : s \leq 2^m)$.  We need only verify that there are parameters $\ov{c}_t$ in $M$ for $t \leq n$ and instances $b_s$ for $s \leq 2^m$ so that 
$$\theta(b_s;\ov{c}_t) \Leftrightarrow t \in w_s$$

Note that $F(z_1,z_2)$ is a complete quantifier-free type and we have both $F(i^*_1,i^*_2,a_3, \ldots, a_n)$ and $\theta'(\ov{b}_{i^*_1},\ov{b}_{i^*_2},\ov{b}_{a_3}, \ldots, \ov{b}_{a_n})$.  Similarly, $G(z_1,z_2)$ is a complete quantifier-free type and we have both $G(j^*_1,j^*_2,a_3, \ldots, a_n)$ and 

\noindent $\neg \theta'(\ov{b}_{j^*_1},\ov{b}_{j^*_2},\ov{b}_{a_3}, \ldots, \ov{b}_{a_n})$.  Thus by $\R$-indiscernibility:
\begin{enumerate}
\item[(1)] $F(z_1,z_2) \Rightarrow \theta(\ov{b}_{z_1},\ov{b}_{z_2},\ov{b}_{a_3}, \ldots, \ov{b}_{a_n})$
\item[(2)] $G(z_1,z_2) \Rightarrow \neg \theta(\ov{b}_{z_1},\ov{b}_{z_2},\ov{b}_{a_3}, \ldots, \ov{b}_{a_n})$ 
\end{enumerate}

We are done if we can show that there exist values for $y_1, \ldots, y_{2^m}$, $z_1, \ldots, z_m$ satisfying
\begin{itemize}
\item[(3)] $F(y_s,z_t)$ if $t \in w_s$
\item[(4)] $G(y_s,z_t)$ if $t \notin w_s$
\end{itemize}
as then $b_s$ can be taken to be the interpretations of $y_s$ and $\ov{c}_t$ can be taken to be the interpretations of $z_t^{\frown}a_3 \ldots a_n$.

The conditions from (3), (4) specify a type 

\noindent $\Sigma(y_1, \ldots, y_{2^n};z_1, \ldots, z_n;a_3, \ldots, a_n)$ which it remains to show is satisfiable in $\R$.  However, this type amounts to two components:
\begin{itemize}
\item[(5)] $R(y_s,z_t) \Leftrightarrow t \in w_s$, and
\item[(6)] 	\begin{itemize}
	\item[(i)] $p'(y_s,z_t/a_3, \ldots, a_n)$
	\item[(ii)] $u(y_s)$
	\item[(iii)] $v(z_t)$
	\end{itemize}
\end{itemize}

First, we need to know that substituting variables $u_i$ for the $a_i$ that we can find some finite ordered graph $B$ on 

\vspace{.1in}

$\{y_s\}_s \cup \{z_t\}_t \cup \{u_3, \ldots, u_n\}$ 

\vspace{.05in}

realizing 

\vspace{.05in}

$\Sigma(\ov{y};\ov{z};\ov{u}) \cup \{ \textrm{the complete quantifier-free type of~} a_3, \ldots, a_n \textrm{~in~} u_3, \ldots, u_n \}$

\vspace{.1in}

The graph type of the $a_i$ specifies graph relations among the $u_i$, (5) specifies graph relations among the $y_s$ and $z_t$, (6)(ii) specifies graph relations between the $y_s$ and the $u_i$, and (6)(iii) specifies graph relations between the $z_t$ and the $u_i$.  No pair is mentioned twice, so there is no inconsistency in the graph type.  The members of (6)(i) as well as the order type of the $a_i$ specify a partial order on $B$ that can be extended to a linear order.  Any completion of the partial order to a linear order serves as the complete graph type of $B$.

Name the ordered graph on $\{a_3, \ldots, a_n\}$ by $A$.  It is clear that $A$ embeds into $B$ as $B$ realizes the complete ordered graph type of $A$ on $u_3, \ldots, u_n$.  By weak homogeneity, since $\R^{<}$ embeds $A$ and $B \in \K_g$, it must embed $B$ over $A$.  Thus $\Sigma$ is realized in $\R^{<}$.
\end{proof}

We have shown that for $L$-formula $\theta'(x;\ov{y})$, $\theta'$ has IP in $M_1 \vDash T$, thus $T$ has IP.
\end{proof}

\begin{thm}[NIP Characterization Theorem]\label{8} A theory $T$ has NIP if and only if any quantifier-free weakly-saturated ordered graph-indiscernible in a model of $T$ is an indiscernible sequence. \end{thm}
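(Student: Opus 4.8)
The plan is to read Theorem \ref{8} off Lemmas \ref{1} and \ref{2}, which between them already supply the two directions of the biconditional in contrapositive form. Write (C) for the assertion ``every quantifier-free weakly-saturated ordered graph-indiscernible in a model of $T$ is an indiscernible sequence.'' Lemma \ref{2} states $\neg$(C) $\Rightarrow$ IP, equivalently NIP $\Rightarrow$ (C); Lemma \ref{1} states IP $\Rightarrow$ $\neg$(C), equivalently (C) $\Rightarrow$ NIP. Conjoining the two contrapositives yields NIP $\iff$ (C), so the proof of Theorem \ref{8} itself is a one-line appeal to the two lemmas. The substance is entirely in the lemmas, so a faithful proposal must say how each of those is obtained.

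For the $(C)\Rightarrow$NIP half (Lemma \ref{1}) the approach is: assume $T$ has IP, pass to an $\aleph_0$-saturated $M\vDash T$, and invoke the fact from \cite{lassh03} that IP yields a symmetric formula $\varphi(x;y)$ coding all finite graphs; realize the edge relation of the random ordered graph $\R^<$ as the $\varphi$-relation on a countable $A\subseteq M$ and index $A$ by $\R^<$ via the resulting isomorphism, giving parameters $(a_g:g\in\R^<)$. Now use the modeling property for $\R^<$-indexed indiscernibles, which is available because $\K_g$ is a Ramsey class (Corollary \ref{33}) and hence, through the apparatus of Section \ref{207} (Theorem \ref{7} and Corollary \ref{57}), $I$-indexed indiscernibles have the modeling property whenever $I$ is a quantifier-free weakly-saturated ordered graph; this produces an $\R^<$-indiscernible $(b_g:g\in\R^<)$ based on the $a_g$. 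Being based on the $a_g$ forces $\varphi(b_g,b_h)\iff R(g,h)$ in $\R^<$, and since $\R^<$ realizes both an increasing edge and an increasing non-edge, $(b_g)$ is not order-indiscernible.

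For the NIP$\Rightarrow$(C) half (Lemma \ref{2}) the approach is the converse extraction: take a quantifier-free weakly-saturated ordered graph-indiscernible that is not an indiscernible sequence, stretch it via Lemma \ref{4} to an $\R^<$-indiscernible of the same general type, and then perform the combinatorial reduction. The obstruction is witnessed by two increasing $n$-tuples of a common order type whose complete $L_g$-types give opposite $\theta$-values in the target; exploiting the transitive left-action of $(\Z/2\Z)^{\binom{n}{2}}$ on increasing $R$-types and a minimal-length decomposition into generators, reduce to two tuples differing in the edge relation of exactly one pair, and use ultrahomogeneity of $\R^<$ to make the shared coordinates literally coincide. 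One then realizes inside $\R^<$, over the common parameters $a_3,\dots,a_n$, a shattering configuration of vertices $y_s$ ($s\le 2^m$) and $z_t$ ($t\le m$) with $R(y_s,z_t)\iff t\in w_s$ and all remaining pair-types prescribed by the reduced configuration; this is legitimate because the resulting finite ordered graph lies in $\K_g$ and $\R^<$ embeds it over $\{a_3,\dots,a_n\}$ by weak homogeneity, whereupon $\R^<$-indiscernibility transfers the $\theta/\neg\theta$ alternation and shows the partitioned formula $\theta'(x;\bar y)$ has IP.

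The genuinely delicate step, and the one I expect to be the main obstacle, is this combinatorial reduction in Lemma \ref{2}: converting an arbitrary disagreement across $R$-types into a single-edge flip by the group-action/minimality argument, and then checking that the required shattering pattern is consistent \emph{as a finite ordered graph} — no pair of vertices is constrained twice, and the induced partial order extends to a linear order — so that it embeds into $\R^<$. Once that and the modeling property are granted, assembling Lemmas \ref{1} and \ref{2} to conclude Theorem \ref{8} is immediate.
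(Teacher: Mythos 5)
Your proposal is correct and follows the paper's route exactly: Theorem \ref{8} is proved by citing Lemmas \ref{1} and \ref{2} as the two contrapositive directions, and your reconstruction of each lemma's proof (the graph-coding formula plus the modeling property for Lemma \ref{1}; the stretch to $\R^<$, the $(\Z/2\Z)^{\binom{n}{2}}$-action reduction to a single-edge flip, and the weak-homogeneity realization of the shattering configuration for Lemma \ref{2}) matches the paper's argument step for step. No gaps.
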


\begin{proof}
The proof is by Lemmas \ref{1} and \ref{2}.
\end{proof}

\section*{Acknowledgements}

This paper comes out of the author's thesis work under the advisory of Thomas Scanlon and Leo Harrington.  This research was partially supported by Scanlon's grant NSF DMS-0854998.  The author would like to acknowledge very helpful discussions with John Baldwin, John Goodrick, Menachem Kojman, Chris Laskowski and Carol Wood in clarifying both the ideas and the presentation of this paper.  Many thanks to Baldwin, Goodrick and Laskowski for their suggestions to eliminate reference to the ``random ordered graph'', $\R$, in order to simplify the main result and to Kojman for pointing us towards the $\nr$ result in the first place.

\bibliographystyle{elsarticle-num}
\bibliography{preref}

\end{document}